\renewenvironment{proof}[1][\proofname]{\par
  \pushQED{\qed}%
  \normalfont \topsep6\p@\@plus6\p@\relax
  \trivlist
  \item[\hskip\labelsep
        \itshape
    #1]\ignorespaces
}{%
  \popQED\endtrivlist\@endpefalse
}
\DeclareMathOperator{\conv}{conv}
\DeclareMathOperator*{\argmax}{argmax}
\DeclareMathOperator*{\argmin}{argmin}
\newcommand{\Po}{\mathcal{P}}
\newcommand{\D}{\mathcal{D}}
\renewcommand*\env@matrix[1][c]{\hskip -\arraycolsep
  \let\@ifnextchar\new@ifnextchar
  \array{*\c@MaxMatrixCols #1}}
\newtheorem{thm}{Theorem}{\bf}{\rm}
\newtheorem{cor}[thm]{Corollary}{\bf}{\rm}
{\bf}{\rm}
\newtheorem{prop}[thm]{Proposition}{\bf}{\rm}
\newdefinition{defn}[thm]{Definition}{\bf}{\rm}
\newdefinition{rem}[thm]{Remark}{\bf}{\rm}
\newdefinition{ex}[thm]{Example}{\bf}{\rm}
\renewcommand*{\top}{%
  {\mathpalette\@transpose{}}%
}
\newcommand*{\@transpose}[2]{%
  \raisebox{\depth}{$\m@th#1\scriptscriptstyle\mathsf{T}$}%
}
\begin{document}
\begin{frontmatter}
\title{On disjunction convex hulls by lifting\tnoteref{label0}}
\tnotetext[label0]{A short preliminary version of this work appeared 
in the proceedings of ISCO 2024; see \cite{QL_ISCO2024}}

\author[label1]{Yushan Qu}
\author[label1]{Jon Lee}

\affiliation[label1]{organization={University of Michigan},
            city={Ann Arbor},
            state={MI},
            country={USA}}

\begin{abstract}
We study the natural extended-variable formulation for the
disjunction of $n+1$ polytopes in $\mathbb{R}^d$. We demonstrate that the convex hull 
$\D$ in  the natural extended-variable space $\mathbb{R}^{d+n}$ is given by full optimal big-M lifting 
(i) when $d\leq 2$ (and that it is not generally true for $d\geq 3$), and also (ii) under some technical conditions, when the polytopes  have a common facet-describing constraint matrix, for arbitrary $d\geq 1$ and $n\geq 1$. 
We give a broad family of examples
with $d\geq 3$ and $n=1$, where the convex hull is not described after employing all full optimal big-M lifting inequalities, but it is 
described after one round of MIR inequalities. 
Additionally, we give some general results on the polyhedral structure of $\D$,
and we demonstrate that all facets of 
$\D$ can be enumerated in polynomial time when $d$ is fixed.
\end{abstract}

\begin{keyword}
mixed-integer optimization
\sep disjunction 
\sep big M 
\sep lifting 
\sep convex hull 
\sep polytope 
\sep facet
\sep mixed-integer rounding
\sep MIR inequality
\end{keyword}

\end{frontmatter}

\section*{Introduction}
In the context of mathematical-optimization models, the so-called ``big-M'' method is a 
classical way to treat disjunctions 
by using binary variables (see \cite[Section 26-3.I, parts (b--d,g)]{Dantzig}, \cite[Sections 8.1.2--3]{LeeLP}, for example). 
Perhaps it is the most powerful, feared, and 
abused modeling technique in mixed-integer optimization. 
For a simple example, we may have the choice
(for $x\in \mathbb{R}$) of $x=0$
or $\bm{\ell}\leq x\leq \bm{u}$ (where the constants satisfy
$0<\bm{\ell} < \bm{u}$).  In applications, for  $x$ in the ``operating range'' $[\bm{\ell},\bm{u}]$, we might incur a cost $\bm{c}>0$, 
which is commonly modeled with a binary variable $z$, and cost term $\bm{c}z$ (in a minimization objective function). It remains to link $x$ and $z$ with some constraints. 
Carelessly, we could use the continuous relaxation:
\begin{align}
&0 \leq x \leq \bm{u},\nonumber\\
&  (z-m_2) \frac{\bm{\ell}}{1-m_2} \leq x \leq \frac{\bm{u}}{1-m_1}z,\label{eq:force}
\end{align}
for any $0\leq m_i <1$, for $i=1,2$. 
Note that for $z=0$, \eqref{eq:force} reduces to $ -\frac{m_2}{1-m_2}\bm{\ell} \leq  x \leq 0$,
the left-hand inequality being redundant with respect to $x\geq 0$, and altogether implying that $x=0$.
And for $z=1$,  \eqref{eq:force} reduces to 
$\bm{\ell} \leq x \leq \frac{\bm{u}}{1-m_1}$, the right-hand inequality of which is redundant with respect to $x\leq \bm{u}$,  altogether implying that $\bm{\ell}\leq x \leq \bm{u}$.
So the model is valid --- and not  generally recommended, for $m_i>0$.

In this simple example, we can easily check
that we get the tightest relaxation possible (the so-called ``convex-hull relaxation'')
if and only if we  
choose $m_1=m_2=0$, whereupon  
 \eqref{eq:force} becomes $\bm{\ell} z \leq x \leq \bm{u} z$.
Notice that if we take $m_1$ (resp., $m_2$) to be near 1,
then the coefficient on $z$ on the right-hand (resp., left-hand)
inequality of  \eqref{eq:force}
becomes big --- a so-called ``big-M''. 
In the context of mixed-integer \emph{nonlinear} optimization, 
the story of this little example does not even end here 
(see, for example, \cite{lee_gaining_2020,PLPerspecCTW,lee2020piecewise}, and the references therein). 

Although our example is quite simple and perhaps trivial, 
the phenomenon is serious for modelers and solvers. 
From \cite{GurobiM},
we have: ``Big-M constraints are a regular source of instability for optimization problems.''
But it is not just numerical instability;  
from \cite{GurobiM}, we also have: 
``In other words, $x$ can take a [very small] positive value without incurring an expensive fixed charge on [$z$], which subverts the intent of only allowing a non-zero value for $x$ when the binary variable [$z$] has the value of 1.''
This behavior means weak continuous relaxations and
often very poor behavior of algorithms like branch-and-bound, branch-and-cut, etc., on such models. 

In higher dimension $d$ for $x$ (above it was $d=1$) and higher number $n+1$ (above it was $n+1=2$) of polytope regions  $\Po_i$ for $x$, the 
situation is cloudier than what was exposed in the simple example above, and 
our high-level goal is to 
carefully investigate when we
can nicely characterize the convex $\D$ in the natural extended-variable space, i.e., in $\mathbb{R}^{d+n}$ (see \S\ref{sec:D_def} for a precise definition of $\D$).

\medskip
\noindent {\bf Our main contributions and organization}:
In \S\ref{sec:def}, we lay out our main definitions, in particular ``full optimal big-M lifting''.
In \S\ref{sec:general}, we present
general polyhedral results for $\D$  (Theorems \ref{lem:fulldim}-\ref{lem:sumz}).
In \S\ref{sec:lowdim}, we
demonstrate that
for $d\in\{1,2\}$, with $n\geq 1$ arbitrary, $\D$ is completely described by employing full optimal big-M lifting inequalities (Theorem \ref{thm:hull}).
Also, we will see that this is not true for $d=3$. 
In \S\ref{sec:allfacets}, 
we demonstrate that all facets of 
$\D$ can be enumerated in polynomial time,
when $d$ is fixed (Theorem \ref{thm:computing}).
In \S\ref{sec:standard},
motivated by ideas in \S\S\ref{sec:lowdim}--\ref{sec:allfacets}, 
we  give a broad family of examples
with $d\geq 3$ and $n=1$, where $\D$ is not described after employing all full optimal big-M lifting inequalities (Theorem \ref{thm:allfacetssimplex}), but it is 
described after one round of MIR (mixed-integer rounding) inequalities (Theorem \ref{thm:MIRsimplex}). 
In \S\ref{sec:common}, 
we demonstrate that 
$\D$ is completely described by full optimal big-M lifting
when the polytopes have a common 
facet-describing constraint matrix 
(for arbitrary $d\geq 1$ and $n\geq 1$), under an  important technical condition
(Theorem \ref{thm:hull3}).
 We note that the cases
 covered by both Theorems \ref{thm:hull} and \ref{thm:hull3}
 are very natural for applications and within spatial branch-and-bound (in particular, the second case is already useful when the polytopes
are all hyper-rectangles; see Corollary \ref{cor:hyper}), the main algorithmic paradigm for 
mixed-integer nonlinear optimization. \S\ref{sec:outlook} contains some directions for further study.

\medskip
\noindent
{\bf Standard terminology concerning polytopes.} We follow the terminology  
of \cite[Chapter 0.5]{Lee_Cambridge}.
If $\alpha^\top x \leq \beta$ is satisfied by
all points of a polytope $\Po\subset \mathbb{R}^n$, then it is \emph{valid} for $\Po$. For a valid inequality $\alpha^\top x \leq \beta$  of $\Po$, the \emph{face described} is $\Po\cap \{ x \in \mathbb{R}^n ~:~ \alpha^\top x = \beta\}$.
The \emph{dimension} of a polytope $\Po$ is one less than the maximum number of affinely independent points in $\Po$. A polytope $\Po\subset \mathbb{R}^n$ is \emph{full dimensional} if its dimension is $n$. A \emph{facet} of $\Po$ is a face of dimension one less than that of $\Po$. If $\alpha^\top x \leq \beta$
describes a facet of a full-dimensional polytope $\Po$, then any other
inequality that also describes that face is a positive multiple of $\alpha^\top x \leq \beta$. Additionally, for a full-dimensional polytope $\Po$, the solution set of its (essentially unique) facet-describing inequalities is precisely $\Po$. Also, see \cite{Grunbaum2003} and \cite{ziegler} for
much more on polytopes.

\section{Definitions}\label{sec:def}

\subsection{Model and notation}
 In what follows, for $n\geq 1$ we let $N_0 := \{0,1,...,n\}$ and  $N:= \{1,...,n\}$. 
 For $d\geq 1$ and $n\geq 1$, we consider full-dimensional polytopes $\Po_i:=\{ x\in\mathbb{R}^d ~:~ \bm{A}^i x \leq \bm{b}^i\}$, for  $i\in N_0$\,. 
The number of columns of each matrix $\bm{A}^i$ is $d$, and the number of rows of each $\bm{A}^i$ agrees with the number of elements of the corresponding vector $\bm{b}^i$\,. We note that we do not assume that the $\Po_i$ are pairwise disjoint,
but that could be the case, especially for $d=1$. 
For convenience, we assume that each system $\bm{A}^i x\leq \bm{b}^i$ has
 no redundant inequalities, and then because of the full-dimensional assumption, we have that each inequality defining $\Po_i$ describes a unique facet of $\Po_i$\,. 
 
We define $n$ binary variables $z_i$\,,
with $z_i=1$ indicating that the vector of variables $x$ must be in $\Po_i$\,, for $i \in N$. Further, if $z_i=0$ for all $i \in N$, then the vector of variables $x$ must be in $\Po_0$\,. 
Finally, we constrain the $z_i$ via
\begin{align}
\textstyle \sum_{i\in N} z_i \leq 1, \label{eq:choice}
\end{align}
so, overall, 
$x$ must be in at least one $\Po_i$\,, for $i\in N_0$. Because the $\Po_i$ may overlap, it can be that $z_i=0$ but $x\in \Po_i$ for some $i\in N$. Likewise, we can have $z_i=1$ for 
 some choice of $i\in N$ but $x\in \Po_0$\,.

\subsection{Convex hull}\label{sec:D_def}
We write $\genfrac(){0pt}{1}{x}{z}\in\mathbb{R}^{d+n}$, 
where
$x\in\mathbb{R}^d$ and $z\in\mathbb{R}^n$.
For $i\in N$,  let $\mathbf{e}_i$ denote the $i$-th standard unit vector in $\mathbb{R}^n$,
and additionally for convenience, let  $\mathbf{e}_0$
denote the zero vector in $\mathbb{R}^n$.
For $i\in N_0$\,, let $\bar{\Po}_i:=\{ \genfrac(){0pt}{1}{x}{\mathbf{e}_i}~:~ x\in \Po_i\}$,
which is the polytope $\Po_i\subset \mathbb{R}^d$ lifted
to $\mathbb{R}^{d+n}$ by setting $z=\mathbf{e}_i$\,.
For $i\in N_0$\,, let $\mathcal{X}_i$ be the (finite) set of extreme points of $\Po_i$\,, 
and let $\bar{\mathcal{X}}_i:=\{\genfrac(){0pt}{1}{x}{\mathbf{e}_i} ~:~ x\in \mathcal{X}_i\}$, the (finite) set of extreme points of 
$\bar{\Po}_i$\,. 
 
Let
\begin{align*}
\D:=& \conv
\left\{
\bar{\Po}_i ~:~ i\in N_0
\right\}
= \conv
\left\{
\bar{\mathcal{X}}_i ~:~ i\in N_0
\right\}.
\end{align*}
The definition of $\D$ is an ``inner description'',
but convenient algorithmic approaches generally
work with an ``outer description'' (i.e., 
via linear inequalities). 
See Figures \ref{fig:ex} 
and  \ref{fig:ex2} for examples.
In the right-hand plot of Figure
\ref{fig:ex}, the bottom 
and top facets are described by $0\leq z_1 \leq 1$. 
In Figure \ref{fig:ex2},
the three (trapezoidal) facets are 
    described by $z_1\geq 0$, $z_2\geq 0$, and $z_1+z_2\leq 1$, while the  front and back (triangular) facets are the liftings of the lower and upper bounds for the three intervals.

\begin{figure}[ht]
\centering
\includegraphics[width=0.48\textwidth]{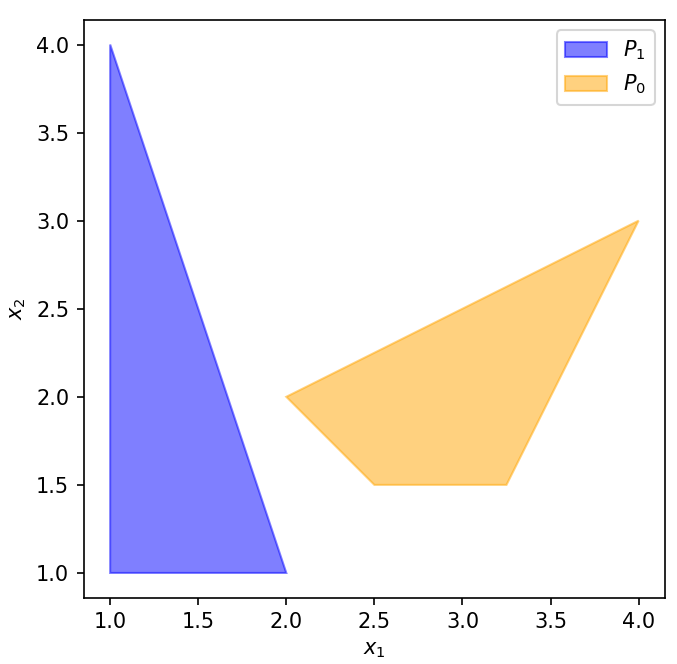}
~
\includegraphics[width=0.48\textwidth]{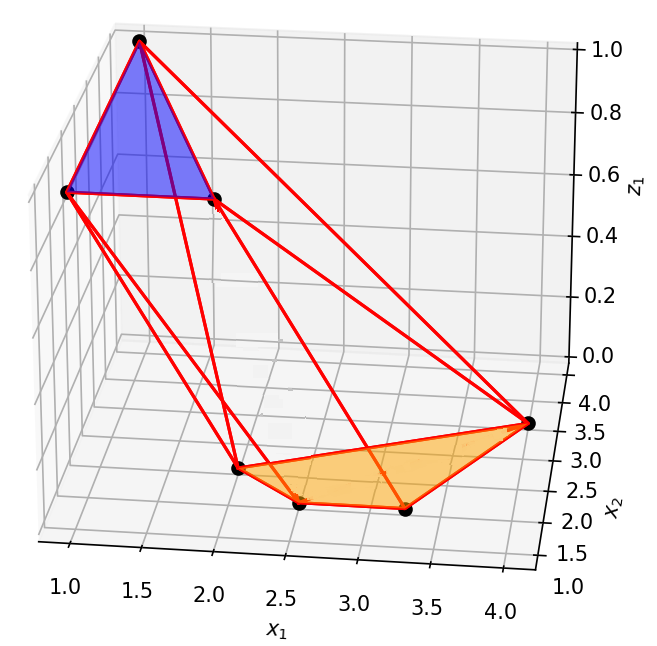}
\caption{Example: $d=2$, $n=1$}\label{fig:ex}
\end{figure}

\begin{figure}[ht]
\centering
\includegraphics[width=0.80\textwidth]
{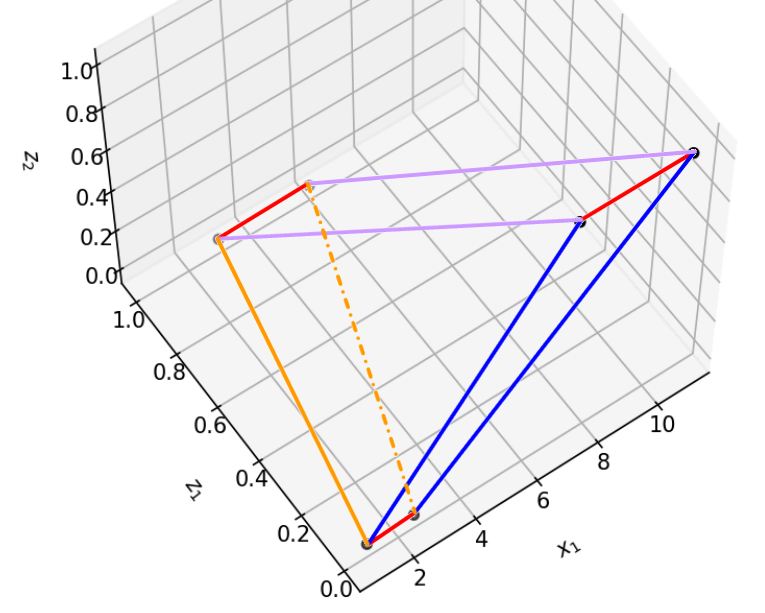}
\caption{Example: $d=1$, $n=2$}\label{fig:ex2}
\end{figure}

\FloatBarrier

\subsection{Full optimal big-M lifting}
Lifting is a general well-known technique
for extending linear inequalities in disjunctive settings.
See, for example, \cite[Chapter II.2, Section 1]{NWbook}.
Next, we describe how to do optimal lifting
of valid inequalities for individual $\Po_i$ to account for the other polytopes.

\bigskip

\noindent\underline{Case 1}: 
Suppose that 
\begin{equation*}
\alpha^\top x \leq \beta \tag{$*$}\label{eq:star}
\end{equation*}
is a valid inequality for $\Po_0$\,.
Now, for some $j\in N$,   we want to
optimally 
 lift $M_j z_j$ into 
this inequality, by choosing the largest $M_j$
so that the resulting inequality
\begin{equation*}
\alpha^\top x + M_j z_j \leq \beta 
\end{equation*}
is valid also for 
 $\bar{\Po}_j$\,.
 We clearly need to choose 
\begin{equation*}
    M_j \leq \beta -\alpha^\top x ,
\end{equation*}
for \emph{all} $x\in \Po_j$\,. And so, to get the
best such lifting, we set
\begin{equation*}
M_j := \min \{ \beta -\alpha^\top x ~:~ x \in \Po_j\}.
\end{equation*}
It is easy to see that starting with any valid linear inequality for $\Po_0$\,, and lifting  one coefficient at a time on the $z_j$\,, until we have lifted all of them, the order of lifting does not matter (because $\sum_{j\in N} z_j \leq 1$ is required) --- we will arrive at the same ``full-lifting inequality'' for $\D$, having the form
 \begin{equation} \textstyle
\alpha^\top x + \sum_{i\in N} M_i z_i \leq \beta. \tag{$*_0$}\label{eq:star0}
\end{equation}

\smallskip

\noindent\underline{Case 2}:
The above lifting development assumes that we start with the
inequality 
\eqref{eq:star}  being  valid  for $\Po_0$\,.
But instead, let us assume that we start with \eqref{eq:star}
being valid for $\Po_k$\,, for some $k\in N$. 
To calculate the optimal lifting coefficient for some $j\in N\setminus\{k\}$, we proceed exactly as above. But to lift the inequality so that is valid for  $\bar{\Po}_0$, we need to proceed a bit differently. We need to choose $M_0$ so that 
 the resulting inequality
 \begin{equation*} \textstyle
\alpha^\top x + M_0 \left( 1- \sum_{i\in N} z_i\right) \leq \beta 
\end{equation*}
is valid also for 
 $\bar{\Po}_0$\,.
   We clearly need to choose 
\begin{equation*}
    M_0 \leq \beta -\alpha^\top x ,
\end{equation*}
for \emph{all} $x\in \Po_0$\,.
And so, to get the
best such lifting, we set
\begin{equation*}
M_0 := \min \{ \beta -\alpha^\top x ~:~ x \in \Po_0\}.
\end{equation*}
Again for this case, it is easy to see that starting with any valid linear inequality for some $\Po_k$\,, and lifting  one  at a time  until we have lifted all, the order of lifting does not matter  --- we will arrive at the same inequality for $\D$. 
In this situation, gathering terms together,
the resulting ``full-lifting inequality''
is 
 \begin{equation} \textstyle
\alpha^\top x + \sum_{i\in N \setminus\{k\}} (M_i-M_0) z_i -M_0 z_k\leq \beta  - M_0\,.
\tag{$*_k$}\label{eq:stark}
\end{equation}

In both cases, it is clear that for each 
facet-defining inequality of each $\Po_k$\,, 
we can compute all of its 
optimal ``big-M'' coefficients simultaneously,
to lift to an inequality valid for $\D$. 
In a nod to the fact that in our setting,
the sequence of lifting does not matter,
and thus the same coefficients are
in a modeling sense optimal big-M
coefficients, we refer to \eqref{eq:star0}
and \eqref{eq:stark} as
\emph{full optimal big-M liftings} of \eqref{eq:star}.

\begin{ex}\label{ex:d_equal_1_lifting}
Suppose that $d=1$ and that $\Po_i:=[\bm{\ell}_i\,,\, \bm{u}_i]$, for $i\in N_0$\,. Consider the valid inequality for $\Po_0$: $x\leq \bm{u}_0$\,. In this case, if we lift all $z_i$ variables, the 
full optimal big-M lifting inequality is
$x+\sum_{i=1}^n M_i z_i \leq \bm{u}_0$\,, where
$M_i=\min\{ \bm{u}_0-x ~:~ x\in [\bm{\ell}_i\,,\, \bm{u}_i]\}=\bm{u}_0-\bm{u}_i$\,.
That is, we obtain: 
$x+\sum_{i=1}^n (\bm{u}_0-\bm{u}_i) z_i \leq \bm{u}_0$\,.
Similarly, if we take the valid inequality for $\Po_0$: $-x_0 \leq - \bm{\ell}_0$\,, then 
$M_i=\min\{ -\bm{\ell}_0+x ~:~ x\in [\bm{\ell}_i\,,\, \bm{u}_i]\}=\bm{\ell}_i-\bm{\ell}_0$\,.
That is, we obtain the full optimal big-M lifting inequality
$x+\sum_{i=1}^n (\bm{\ell}_0-\bm{\ell}_i) z_i \geq \bm{\ell}_0$\,.
\end{ex}

Of course disjunctive formulations and the associated liftings that 
we describe and analyze are not new at all.
Disjunctive programming, at a high level, is the subject of concrete convexification of nonconvex sets, 
in the context of global optimization.
This important topic was introduced by Balas in 1971; see \cite{BalasDP}.
It is a key technique in the well-known integer-programming book
\cite{NWbook}. Fairly recently, four decades after its introduction, and with lots of progress in the intervening time
period,  Balas himself, in probably his last major work, finally
organized the topic into a major text; see \cite{balas2018disjunctive}.
In closely related work, \cite{Jeroslow,Blair,BALAS1988279} studied when the convex hull of polytopes has a simple form,
but they do not work with 
binary variables to manage
a disjunction.
We prefer to include binary variables because of their modeling power (e.g., carrying fixed costs
and logical relationships).
\cite[Section 2.11]{CCZ_Book}
treats disjunctions, but 
discusses models in a higher-dimensional space, with copies of the variables $x$ introduced
for each $\Po_i$\,.
\cite[Section 6]{Vielma2015MixedIL} is closer 
to our viewpoint, but we have opted to present a self-contained treatment
which we hope is broadly accessible to mixed-integer optimization users.

Finally, there is a lot of experience and expectedly many enhancements for methods
for handling disjunctions (see, e.g.,
\cite{belotti2011disjunctive,Lodi-et-al:2020,TRESPALACIOS201598,Misener_Disj}), and with  various particularizations
(see \cite{kilincc2015two,audet}, for example).

Still, there is a lot to understand about the elementary situation that we have set out. In particular, in what follows, 
we will investigate conditions for which the
convex hull $\D$ is completely described by full optimal big-M lifting of the facet-describing inequalities of the $\Po_i$\,.

\section{General polyhedral results}\label{sec:general}

First we present elementary valid inequalities for $\D$ and establish (under mild technical conditions) that they describe facets of $\D$. 

\begin{thm}
\label{lem:fulldim}
Suppose that $d\geq 1$. 
Then if all $\Po_i$ are nonempty, for $i\in N_0$\,, and at least one is full dimensional, then 
$\D$ is full dimensional.
\end{thm}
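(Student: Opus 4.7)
The goal is to exhibit $d+n+1$ affinely independent points in $\mathcal{D}\subset\mathbb{R}^{d+n}$. The plan is to use the full-dimensional polytope to supply $d+1$ affinely independent points in a single ``slice'' $z=\mathbf{e}_k$, and then obtain $n$ extra linearly independent directions by picking one point from each of the remaining $n$ polytopes, exploiting the fact that their lifted copies sit at pairwise distinct values of $z$.

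More concretely, say $\mathcal{P}_k$ is full dimensional, for some $k\in N_0$. Choose $d+1$ affinely independent points $x^0,x^1,\ldots,x^d\in\mathcal{P}_k$ and, for each $i\in N_0\setminus\{k\}$, choose any $y^i\in\mathcal{P}_i$ (which exists by nonemptiness). Form the $d+n+1$ points
\[
\binom{x^j}{\mathbf{e}_k}\in\bar{\mathcal{P}}_k\subset\mathcal{D} \quad (j=0,\ldots,d), \qquad \binom{y^i}{\mathbf{e}_i}\in\bar{\mathcal{P}}_i\subset\mathcal{D} \quad (i\in N_0\setminus\{k\}).
\]
I would then verify affine independence by translating by $\binom{x^0}{\mathbf{e}_k}$ and showing the resulting $d+n$ vectors are linearly independent. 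The first $d$ translated vectors are of the form $\binom{x^j-x^0}{0}$, and the remaining $n$ are of the form $\binom{y^i-x^0}{\mathbf{e}_i-\mathbf{e}_k}$.

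The linear-independence check splits cleanly by coordinate block. The $z$-block of any linear combination is $\sum_{i\in N_0\setminus\{k\}}\beta_i(\mathbf{e}_i-\mathbf{e}_k)$; expanding using $\mathbf{e}_0=0$ and the linear independence of $\mathbf{e}_1,\ldots,\mathbf{e}_n$ forces every $\beta_i$ to vanish (both when $k=0$ and when $k\in N$). With all $\beta_i=0$, the $x$-block reduces to $\sum_{j=1}^d\alpha_j(x^j-x^0)=0$, which by the affine independence of $x^0,\ldots,x^d$ in $\mathbb{R}^d$ forces every $\alpha_j=0$.

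The argument is essentially a bookkeeping exercise; the one point that needs slight care is handling the case $k\in N$ uniformly with $k=0$, which is why I would write $N_0\setminus\{k\}$ and use $\mathbf{e}_0=0$ throughout rather than splitting into two cases. I do not anticipate any real obstacle.
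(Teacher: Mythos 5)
Your proposal is correct and follows essentially the same route as the paper: both exhibit $d+n+1$ points consisting of $d+1$ affinely independent points of the full-dimensional $\Po_k$ lifted at $z=\mathbf{e}_k$ plus one lifted point from each remaining polytope. The only difference is cosmetic—you verify affine independence by translating and checking linear independence blockwise (handling $k=0$ and $k\in N$ uniformly), while the paper computes a determinant of an augmented matrix in two separate cases; your $z$-block argument does correctly force $\beta_0=0$ as well, via the $\mathbf{e}_k$ coefficient.
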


\begin{thm}
\label{lem:facet_lift}
Suppose that $d\geq 1$. 
Then if all $\Po_i$ are nonempty, for $i\in N_0$\,,  then 
the full optimal big-M lifting of a facet-describing inequality of a full-dimensional $\Po_k$ (for any $k\in N_0$)
is facet describing for $\D$. 
\end{thm}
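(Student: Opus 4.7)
The plan is to argue directly from the definition of a facet: since Theorem~\ref{lem:fulldim} already gives $\dim \D = d+n$, it suffices to check that the lifted inequality is valid for $\D$ and to exhibit $d+n$ affinely independent points of $\D$ that satisfy it at equality. Validity is essentially free from the construction: each $M_i$ was chosen as $\min\{\beta - \alpha^\top x : x \in \Po_i\}$, well-defined because each $\Po_i$ is nonempty and bounded, and this choice is exactly what makes the lifted inequality valid for each $\bar{\Po}_i$ and hence for $\D$.

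In Case~1, where $\alpha^\top x \leq \beta$ is facet-describing for the full-dimensional $\Po_0$, I would start with $d$ affinely independent points $x^1,\ldots,x^d\in\Po_0$ that are tight on this facet; the lifted points $\binom{x^j}{\mathbf{e}_0}$ are then tight on $(*_0)$ and lie in $\bar{\Po}_0\subset\D$. For each $j\in N$, I would additionally pick $x^{*j}\in\argmin\{\beta-\alpha^\top x : x\in \Po_j\}$, so that $\binom{x^{*j}}{\mathbf{e}_j}\in\bar{\Po}_j$ achieves equality in $(*_0)$ by the definition of $M_j$. To check affine independence of the resulting $d+n$ points, I would consider an affine combination equal to zero and project onto the $z$-coordinates: the vectors $\mathbf{e}_1,\ldots,\mathbf{e}_n$ are linearly independent, which forces the coefficients on the $\binom{x^{*j}}{\mathbf{e}_j}$ to vanish; then the affine-combination condition plus affine independence of $x^1,\ldots,x^d$ kills the remaining coefficients.

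Case~2, where the facet-describing inequality lives on $\Po_k$ for some $k\in N$, is structurally the same. I would take $d$ affinely independent $x^1,\ldots,x^d\in\Po_k$ tight on the chosen facet, lifted via $z=\mathbf{e}_k$; for each $i\in N\setminus\{k\}$ a minimizer $x^{*i}$ lifted via $z=\mathbf{e}_i$; and a minimizer $x^{*0}\in\Po_0$ lifted via $z=0$. All $d+n$ points are tight on $(*_k)$ by construction of $M_0$ and the $M_i$. The $z$-projection argument again does the work: reading off coordinates $i\in N\setminus\{k\}$ kills the $\mu_i$ coefficients, reading off coordinate $k$ forces the sum of the $\lambda_j$ to be zero, the global affine-combination equation then kills $\nu$, and affine independence of $x^1,\ldots,x^d$ finishes.

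I do not anticipate a real obstacle: the one thing to be careful about is that the very convenient structure of the $z$-patterns $\mathbf{e}_0,\mathbf{e}_1,\ldots,\mathbf{e}_n$ (respectively the Case~2 analogue) lets the affine-independence check be read off almost mechanically, so the substance of the proof is really just making the right choice of $d$ facet points in $\Po_k$ together with one optimally lifted representative from each other $\Po_i$.
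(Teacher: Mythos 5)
Your proposal is correct and follows essentially the same route as the paper: you select the same tight points ($d$ affinely independent points on the chosen facet of $\Po_k$, plus one lifting minimizer from each other $\Po_i$) and verify affine independence, which the paper does via a block-triangular determinant borrowed from the proof of Theorem~\ref{lem:fulldim} while you do the equivalent projection-onto-$z$-coordinates argument directly.
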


\begin{thm}
\label{lem:nonneg}
Suppose that $d\geq 1$. 
Then if all $\Po_i$ are nonempty, for $i\in N_0$\,, and $\Po_0$ is full dimensional, then for all $j\in N$, $z_j\geq 0$ is facet describing for $\D$. 
\end{thm}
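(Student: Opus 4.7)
The plan is to use the standard facet-dimension criterion: since Theorem~\ref{lem:fulldim} guarantees that $\D$ is full dimensional (of dimension $d+n$), and since $z_j \geq 0$ is obviously valid for $\D$ (every extreme point $\binom{x}{\mathbf{e}_i}$ of some $\bar{\Po}_i$ satisfies $z_j \in \{0,1\}$, so $z_j \geq 0$ holds on all of $\D$ by convexity), it suffices to exhibit $d+n$ affinely independent points in the face $F_j := \D \cap \{\binom{x}{z} : z_j = 0\}$.

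First I would use the full dimensionality of $\Po_0$ to pick $d+1$ affinely independent points $x^0_1,\ldots,x^0_{d+1} \in \Po_0$ and lift them to $\binom{x^0_k}{\mathbf{e}_0} = \binom{x^0_k}{\mathbf{0}} \in \bar{\Po}_0 \subset \D$, for $k=1,\ldots,d+1$. All of these obviously satisfy $z_j = 0$. Next, for each $i \in N \setminus \{j\}$, I would use the nonemptiness of $\Po_i$ to pick any $y^i \in \Po_i$ and form $\binom{y^i}{\mathbf{e}_i} \in \bar{\Po}_i \subset \D$; since $i \neq j$, the $z_j$ coordinate of $\mathbf{e}_i$ is $0$, so these points also lie in $F_j$. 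This yields $(d+1) + (n-1) = d+n$ candidate points in $F_j$.

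To verify affine independence, I would suppose
\begin{equation*}
\textstyle \sum_{k=1}^{d+1} \lambda_k \binom{x^0_k}{\mathbf{0}} + \sum_{i \in N \setminus \{j\}} \mu_i \binom{y^i}{\mathbf{e}_i} = \mathbf{0}, \qquad \sum_{k=1}^{d+1} \lambda_k + \sum_{i \in N \setminus \{j\}} \mu_i = 0.
\end{equation*}
Reading off each coordinate $z_i$ for $i \in N \setminus \{j\}$ immediately forces $\mu_i = 0$, because the only contribution to that coordinate comes from the corresponding lifted point $\binom{y^i}{\mathbf{e}_i}$. The remaining relation $\sum_k \lambda_k x^0_k = 0$ with $\sum_k \lambda_k = 0$ then forces all $\lambda_k = 0$ by the affine independence of the $x^0_k$. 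Hence the $d+n$ points are affinely independent, so $\dim F_j \geq d+n-1$. Since $F_j$ is a proper face of $\D$ (as $\binom{y^j}{\mathbf{e}_j} \in \bar{\Po}_j \subset \D$ has $z_j = 1$, assuming $\Po_j$ is nonempty), we conclude $\dim F_j = d+n-1$, so $z_j \geq 0$ describes a facet.

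There is no real obstacle here; the argument is a straightforward affine-independence count, and the only delicate point is remembering to pick the $d+1$ points from $\Po_0$ (whose full dimensionality is hypothesized) rather than from some $\Po_i$ with $i \in N \setminus \{j\}$, which we only know to be nonempty.
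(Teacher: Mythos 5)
Your proof is correct and takes essentially the same approach as the paper's: the same $d+n$ points ($d+1$ affinely independent points of $\Po_0$ lifted with $z=\mathbf{e}_0$, plus one point of each $\Po_i$ for $i\in N\setminus\{j\}$ lifted with $z=\mathbf{e}_i$) are used to lower-bound the dimension of the face described by $z_j\geq 0$, with properness and validity handled as you do. The only cosmetic difference is that you verify affine independence directly from a dependence relation, whereas the paper invokes the block-matrix determinant argument from its proof of Theorem~\ref{lem:fulldim}.
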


\begin{thm}
\label{lem:sumz}
Suppose that $d\geq 1$. 
Then if all $\Po_i$ are nonempty, for $i\in N_0$\,,
and $\Po_k$ is full dimensional for some $k\in N$, then $\sum_{j\in N} z_j \leq 1$
is facet describing for $\D$. 
\end{thm}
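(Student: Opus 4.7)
The plan is to verify validity and then exhibit $d+n$ affinely independent points lying on the face $F := \D \cap \{(x,z) : \sum_{j\in N} z_j = 1\}$. Since $\Po_k$ is full dimensional, Theorem \ref{lem:fulldim} gives $\dim \D = d+n$, so it suffices to show $\dim F = d+n-1$. Validity of $\sum_{j\in N} z_j \leq 1$ on $\D$ is immediate: each generator $\bar{\mathcal{X}}_i$ has $z$-component $\mathbf{e}_i$ with $\sum_j (\mathbf{e}_i)_j \in \{0,1\}$, so the inequality holds on $\conv\{\bar{\mathcal{X}}_i : i\in N_0\} = \D$. Since $\bar{\Po}_0$ (which is nonempty) lies strictly inside the halfspace, $F$ is a proper face and hence $\dim F \leq d+n-1$.

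For the lower bound on $\dim F$, first note that a point of $\D$ lies on the face $F$ iff in every convex representation it uses only generators from $\bigcup_{i\in N} \bar{\mathcal{X}}_i$ (generators from $\bar{\mathcal{X}}_0$ would contribute $\mathbf{e}_0 = 0$ to the $z$-sum). So $F = \conv\{\bar{\mathcal{X}}_i : i \in N\}$. The construction of the affinely independent family is then:
\begin{itemize}
\item Using full-dimensionality of $\Po_k$, pick $d+1$ affinely independent points $y^0,y^1,\ldots,y^d \in \Po_k$, and form $(y^j, \mathbf{e}_k) \in \bar{\Po}_k \subset F$ for $j=0,\ldots,d$.
\item For each $i \in N \setminus \{k\}$ (nonemptiness of $\Po_i$), pick any $x^i \in \Po_i$ and form $(x^i, \mathbf{e}_i) \in \bar{\Po}_i \subset F$.
\end{itemize}
This yields $(d+1) + (n-1) = d+n$ points of $F$.

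The affine-independence check is the only mildly delicate step. Suppose $\sum_{j=0}^d \lambda_j (y^j,\mathbf{e}_k) + \sum_{i \in N\setminus\{k\}} \mu_i (x^i, \mathbf{e}_i) = 0$ with $\sum_j \lambda_j + \sum_i \mu_i = 0$. Projecting onto the $z$-coordinates and using that $\{\mathbf{e}_i : i\in N\}$ are linearly independent in $\mathbb{R}^n$ forces $\mu_i = 0$ for all $i\in N\setminus\{k\}$ and $\sum_j \lambda_j = 0$. Projecting onto the $x$-coordinates then gives $\sum_j \lambda_j y^j = 0$ with $\sum_j \lambda_j = 0$, so affine independence of $y^0,\ldots,y^d$ in $\mathbb{R}^d$ yields $\lambda_j = 0$ for all $j$. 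Hence $\dim F \geq d+n-1$, so combined with the upper bound $F$ is a facet.

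The only potential obstacle is the bookkeeping in the affine dependence; the argument above bypasses this by separating the $z$-projection (which kills the $\mu_i$ and reduces the $\lambda$-relation to an affine one) from the $x$-projection (which kills the $\lambda_j$ using affine independence of the $y^j$). Nothing beyond the nonemptiness of the $\Po_i$ for $i \in N_0$ and the full-dimensionality of the single $\Po_k$ is needed.
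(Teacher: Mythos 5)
Your proposal is correct and takes essentially the same approach as the paper: validity plus the same $n+d$ points on the face, namely $d+1$ affinely independent points of $\Po_k$ lifted at $\mathbf{e}_k$ together with one point of each $\Po_i$, $i\in N\setminus\{k\}$, lifted at $\mathbf{e}_i$. The only difference is that you check affine independence directly by projecting onto the $z$- and then $x$-coordinates, while the paper refers back to the determinant argument in the second case of the proof of Theorem \ref{lem:fulldim}; both verifications are sound.
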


\begin{proof}~\!\!\!\!{\it [of Theorem \ref{lem:fulldim}].}
There are two cases to consider.
First, we assume that $\Po_0$ is full dimensional.
Because $\Po_0$ is full dimensional, 
then $\Po_0$ contains a set of $d+1$ affinely-independent points, say $\hat{x}^0,\hat{x}^1,\ldots,\hat{x}^d$.
Additionally, we assumed that
each $\Po_i$\,, for $i \in N$, is nonempty, so it contains a point, say $\tilde{x}^i$.
Now, we consider the $n+d+1$ points (in $\mathbb{R}^{n+d})$: 
\[
\genfrac(){0pt}{1}{\hat{x}^0}{\mathbf{e}_0}
,
\genfrac(){0pt}{1}{\hat{x}^1}{\mathbf{e}_0}
,\ldots,
\genfrac(){0pt}{1}{\hat{x}^d}{\mathbf{e}_0}
;
\genfrac(){0pt}{1}{\tilde{x}^1}{\mathbf{e}_1}
,
\genfrac(){0pt}{1}{\tilde{x}^2}{\mathbf{e}_2}
,\ldots,
\genfrac(){0pt}{1}{\tilde{x}^n}{\mathbf{e}_n},
\]
and it remains only to show that these  $n+d+1$ points are affinely independent.
 
We append a row of $1$'s (conveniently) in the middle of the matrix and construct the following square matrix
\begin{equation*}
   S_0 := 
    \left[
\begin{array}{cccc|cccc}
        \hat{x}^0    & \hat{x}^1    & \cdots & \hat{x}^d    & \tilde{x}^1    & \tilde{x}^2    & \cdots & \tilde{x}^n    \\
        1           &  1           & \cdots & 1            & 1            & 1            &        \cdots & 1            \\
        \hline 
        0           &  0           & \cdots & 0            & 1            & 0            &        \cdots & 0 \\
        0           &  0           & \cdots & 0            & 0            & 1            &        \cdots & 0 \\
        .           &  .           & \cdots & .            & .            & .            &        \cdots & . \\
        0           &  0           & \cdots & 0            & 0            & 0            &        \cdots & 1 \\
\end{array}
    \right]. 
\end{equation*}

Letting $C := 0_{n\times(d+1)}$\,,     $D := I_n$\,,
\[
    A := 
    \begin{bmatrix}
        \hat{x}^0    & \hat{x}^1    & \cdots & \hat{x}^d \\
        1           &  1           & \cdots & 1 
    \end{bmatrix}_{(d+1)\times(d+1)}\,, \mbox{ and }
    B := 
    \begin{bmatrix}
    \tilde{x}^1    & \tilde{x}^2    & \cdots & \tilde{x}^n    \\
    1 & 1            &        \cdots & 1            \\
    \end{bmatrix}_{(d+1)\times n}\,,
\]
we have, $S_0 = \begin{bmatrix}
        A & B \\
        C & D\\
    \end{bmatrix} 
    =  \begin{bmatrix}
        A & B \\
        0 & I\\
    \end{bmatrix} $,
    and therefore $\det S_0 = \det(A)$,
    which is nonzero because $\hat{x}^0,\hat{x}^1,\ldots,\hat{x}^d$ are affinely independent.

For the other case, we assume instead that  $\Po_k$ is full dimensional for some $k\in N$.
Because $\Po_k$ is full dimensional, 
it contains a set of $d+1$ affinely-independent points, say $\hat{x}^0,\hat{x}^1,\ldots,\hat{x}^d$.
Additionally, because each $\Po_i$ is nonempty for $i\in N_0\setminus\{k\}$,
it contains a point, say $\tilde{x}^i$.
Now, we consider the $n+d+1$ points (in $\mathbb{R}^{n+d})$: 
\[
\genfrac(){0pt}{1}{\hat{x}^0}{\mathbf{e}_k}
,
\genfrac(){0pt}{1}{\hat{x}^1}{\mathbf{e}_k}
,\ldots,
\genfrac(){0pt}{1}{\hat{x}^d}{\mathbf{e}_k}
;
\genfrac(){0pt}{1}{\tilde{x}^0}{\mathbf{e}_0}
,
\genfrac(){0pt}{1}{\tilde{x}^1}{\mathbf{e}_1}
,\ldots,
\genfrac(){0pt}{1}{\tilde{x}^{k-1}}{\mathbf{e}_{k-1}}
,
\genfrac(){0pt}{1}{\tilde{x}^{k+1}}{\mathbf{e}_{k+1}}
,\ldots,
\genfrac(){0pt}{1}{\tilde{x}^n}{\mathbf{e}_n},
\]
and it remains only to show that these  $n+d+1$ points are affinely independent
 
 We append a row of $1$'s (conveniently) in the middle of the matrix and construct the following square matrix
\begin{equation*}
     \left[
\begin{array}{cccc|ccccccc}
        \hat{x}^0    & \hat{x}^1    & \cdots    & \hat{x}^d    & \tilde{x}^0    & \tilde{x}^1    & \cdots    & \tilde{x}^{k-1}    & \tilde{x}^{k+1}    & \cdots    & \tilde{x}^n \\
        1    &  1    & \cdots & 1    & 1    & 1    & \cdots    & 1    & 1    & \cdots    & 1\\
        \hline
        \mathbf{e}_k    & \mathbf{e}_k    & \cdots    & \mathbf{e}_k    & \mathbf{e}_0    & \mathbf{e}_1    & \cdots    & \mathbf{e}_{k-1}    & \mathbf{e}_{k+1}    & \cdots    & \mathbf{e}_n \\
\end{array}
    \right].
\end{equation*}
Subtracting row $d+1$ from row $d+k+1$, preserving the determinant,
we arrive at the matrix 
$    
\begin{bmatrix}
        A & B \\
        0 & D\\
    \end{bmatrix} $,
where 
\begin{align*}
&A := \begin{bmatrix}
        \hat{x}^0    & \hat{x}^1    & \cdots    & \hat{x}^d \\
        1    &  1    & \cdots & 1 \\
    \end{bmatrix},
    B := \begin{bmatrix}
        \tilde{x}^0    & \tilde{x}^1    & \cdots    & \tilde{x}^{k-1}    & \tilde{x}^{k+1}    & \cdots    & \tilde{x}^n \\
        1    & 1    & \cdots    & 1    & 1    & \cdots    & 1\\
    \end{bmatrix},\\
&    \mbox{ and }
    D := \begin{bmatrix}
        \mathbf{e}_0-\mathbf{e}_k\,,    & \mathbf{e}_1-\mathbf{e}_k\,,    & \cdots\,,    & \mathbf{e}_{k-1}-\mathbf{e}_k\,,    & \mathbf{e}_{k+1}-\mathbf{e}_k\,,    & \cdots\,,    & \mathbf{e}_n-\mathbf{e}_k
    \end{bmatrix}.
\end{align*}
Because again $\det A \not=0$,
it remains only to demonstrate that $\det D \not=0$.
But it is easy to see that multiplying the first column by -1, adding that to the remaining columns, and then moving the first column immediately after the $(k-1)$-st column,
turns $D$ into an identity matrix.
\end{proof}

\begin{proof}~\!\!\!\!{\it [of Theorem \ref{lem:facet_lift}].}
There are two cases to consider.

\noindent\underline{Case 1}: First, we assume that $\Po_0$ is full dimensional. 
{\color{black} Let 
\eqref{eq:star}
be a facet-describing inequality of $\Po_0$\,. Let  
\eqref{eq:star0}
be its full lifting, where $M_i := \min \{ \beta -\alpha^\top x ~:~ x \in \Po_i\}$.}
We need to choose $n+d$ affinely-independent points from $\D$ satisfying \eqref{eq:star0} as an equation to demonstrate that  \eqref{eq:star0} describes a facet of $\D$. 
Let $\hat{x}^1, \hat{x}^2, \cdots,\hat{x}^d$ be $d$ affinely-independent points from $\Po_0$ satisfying $\alpha^\top x = \beta$. So clearly $\genfrac(){0pt}{1}{\hat{x}^i}{\mathbf{e}_0}$ satisfies \eqref{eq:star0} as an equation, for $1\leq i \leq d$.
For  $i \in N$, let $\tilde{x}^i$ be 
an optimal solution of the lifting problem:
 $M_i := \min \{ \beta -\alpha^\top x ~:~ x \in \Po_i\}$.
It is easy to see that $\genfrac(){0pt}{1}{\tilde{x}^i}{\mathbf{e}_i}$ satisfies \eqref{eq:star0} as an equation.
So, overall, we have $n+d$ points satisfying \eqref{eq:star0} as an equation. In fact, these $n+d$ points satisfy the same properties as the ones
using the same notation in the first case of of the proof of
Theorem \ref{lem:fulldim} (just omitting here the point
$\genfrac(){0pt}{1}{\hat{x}^0}{\mathbf{e}_0}$),
so these points are also affinely independent. 

\noindent\underline{Case 2}: Next, we assume instead that $\Po_k$ is full dimensional for some $k\in N$. Let 
\eqref{eq:star}
be a facet-describing inequality of $\Po_k$\,. Let \eqref{eq:stark}
be its full lifting, where $M_0 := \min \{ \beta -\alpha^\top x ~:~ x \in \Po_0\}$.
Let $\hat{x}^1, \hat{x}^2, \cdots,\hat{x}^d$ be $d$ affinely-independent points from $\Po_k$ satisfying $\alpha^\top x = \beta$. So clearly $\genfrac(){0pt}{1}{\hat{x}^i}{\mathbf{e}_k}$ satisfies \eqref{eq:stark} as an equation, for $1\leq i \leq d$. For $i\in N_0 \setminus\{k\}$, let $\tilde{x}^i$ be an optimal solution of the lifting problem:  $M_i := \min \{ \beta -\alpha^\top x ~:~ x \in \Po_i\}$.
It is easy to see that $\genfrac(){0pt}{1}{\tilde{x}^i}{\mathbf{e}_i}$ satisfies \eqref{eq:stark} as an equation. 
So, overall, we have $n+d$ points satisfying \eqref{eq:stark} as an equation. In fact, these $n+d$ points satisfy the same properties as the ones using the same notation in the second case of  the proof of Theorem \ref{lem:fulldim} (just omitting here the point $\genfrac(){0pt}{1}{\hat{x}^0}{\mathbf{e}_k}$), so these points are also affinely independent. 
\end{proof}

\begin{proof}~\!\!\!\!{\it [of Theorem \ref{lem:nonneg}].}
We need to choose $n+d$ affinely-independent points from $\D$ satisfying $z_j \geq 0$ as an equation to demonstrate that the $z_j\geq 0$ describes a facet of $\D$. Because $\Po_0$ is full dimensional, then $\Po_0$ contains a set of $d+1$ affinely-independent points, say $\hat{x}^0,\hat{x}^1,...,\hat{x}^{d}$.  
Clearly, $
\genfrac(){0pt}{1}{\hat{x}^0}{\mathbf{e}_0}
,\ldots,
\genfrac(){0pt}{1}{\hat{x}^{d}}{\mathbf{e}_0}$ satisfy $z_j \geq 0$ as an equation.
Additionally, because each $\Po_i$ is nonempty for $i \in \{1,...,n\}$, it contains a point, say $\tilde{x}^i$. 
Now consider $n-1$ points (in $\mathbb{R}^{n+d})$ from $\bar{\Po}_i$ for $i \in N \setminus\{j\}$, say $
\genfrac(){0pt}{1}{\tilde{x}^1}{\mathbf{e}_1}
,
\genfrac(){0pt}{1}{\tilde{x}^2}{\mathbf{e}_2}
,\ldots,
\genfrac(){0pt}{1}{\tilde{x}^{j-1}}{\mathbf{e}_{j-1}}
,
\genfrac(){0pt}{1}{\tilde{x}^{j+1}}{\mathbf{e}_{j+1}}
,\ldots,
\genfrac(){0pt}{1}{\tilde{x}^n}{\mathbf{e}_n}.
$
It is obvious that those $n-1$ points satisfy $z_j \geq 0$ as an equation. So overall we have $n+d$ points satisfy $z_j \geq 0$ as an equation. We want to prove that those $n+d$ points are affinely independent. Because these $n+d$ points satisfy the same properties as the ones
using the same notation in the first case of the proof of
Theorem \ref{lem:fulldim} (just omitting here the point
$\genfrac(){0pt}{1}{\tilde{x}^j}{\mathbf{e}_j}$), these points are also affinely independent. 
\end{proof}

\begin{proof}~\!\!\!\!{\it [of Theorem \ref{lem:sumz}].}
We need to choose $n+d$ affinely-independent points from $\D$ satisfying $\sum_{j\in N} z_j \leq 1$ as an equation to demonstrate that the $\sum_{j\in N} z_j \leq 1$ describes a facet of $\D$. Because $\Po_k$ is full dimensional, then $\Po_k$ contains a set of $d+1$ affinely-independent points, say $\hat{x}^0,\hat{x}^1,...,\hat{x}^{d}$.  
Clearly, $
\genfrac(){0pt}{1}{\hat{x}^0}{\mathbf{e}_k}
,\ldots,
\genfrac(){0pt}{1}{\hat{x}^{d}}{\mathbf{e}_k}$ satisfy $\sum_{j\in N} z_j \leq 1$ as an equation.
Additionally, because each $\Po_i$ is nonempty for $i \in N_0\setminus\{k\}$, it contains a point, say $\tilde{x}^i$. We consider $n-1$ points from $\bar{\Po}_i$ for $i \in N \setminus\{k\}$, say$
\genfrac(){0pt}{1}{\tilde{x}^1}{\mathbf{e}_1}
,
\genfrac(){0pt}{1}{\tilde{x}^2}{\mathbf{e}_2}
,\ldots,
\genfrac(){0pt}{1}{\tilde{x}^{k-1}}{\mathbf{e}_{k-1}}
,
\genfrac(){0pt}{1}{\tilde{x}^{k+1}}{\mathbf{e}_{k+1}}
,\ldots,
\genfrac(){0pt}{1}{\tilde{x}^n}{\mathbf{e}_n}.
$
It is obvious that $\genfrac(){0pt}{1}{\tilde{x}^i}{\mathbf{e}_i}, i \in N\setminus\{k\}$ satisfy $\sum_{j\in N} z_j \leq 1$ as an equation. So we have $n+d$ points satisfy $\sum_{j\in N} z_j \leq 1$ as an equation.
In fact, these $n+d$ points satisfy the same properties as the ones
using the same notation in the second case of the proof of
Theorem \ref{lem:fulldim} (just omitting here the point
$\genfrac(){0pt}{1}{\tilde{x}^0}{\mathbf{e}_0}$),
so these points are also affinely independent. 
\end{proof}

\section{Low dimension}\label{sec:lowdim}
Henceforth, we refer to facets of $\D$ described by $z_j\geq 0$ and $\sum_{j\in N} z_j \leq 1$ as
\emph{non-vertical facets}, and other facets of $\D$ as \emph{vertical facets}. 

Next, under the same mild technical condition as in \S\ref{sec:general},
we establish that the facet-describing inequalities identified in Propositions \ref{lem:facet_lift},\ref{lem:nonneg},\ref{lem:sumz}
 give a
complete description of the convex hull $\D$, when $d\in\{1,2\}$.

\begin{thm}\label{thm:hull}
Suppose that  $n\geq 1$, $d\in\{1,2\}$,
all $\Po_i$ are nonempty, for $i\in N_0$\,, and at least one is full dimensional.
Then
the full optimal big M-lifting of
each facet-describing inequality defining  each $\Po_i$\,,
for
$i \in N$, together with the inequalities
$\sum_{i\in N} z_i \leq 1$ and  $z_i \geq 0$, for
$i \in N$,
gives the convex hull $\D$.
\end{thm}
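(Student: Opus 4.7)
The plan is to show that the polyhedron $P$ defined by the listed inequalities coincides with $\D$. Since validity gives $\D \subseteq P$ and Theorem \ref{lem:fulldim} ensures $\D$ is full-dimensional, it suffices to argue that every facet-describing inequality $\alpha^\top x + \gamma^\top z \leq \beta$ of $\D$ is (a positive multiple of) one of the listed inequalities. The argument splits naturally on whether $\alpha = 0$ or $\alpha \neq 0$.

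In the case $\alpha = 0$, the facet $F$ projects onto a face of the $z$-projection of $\D$, which equals the standard $n$-simplex $\Delta := \conv\{\mathbf{e}_0, \mathbf{e}_1, \ldots, \mathbf{e}_n\}$. Setting $S := \{i \in N_0 : \gamma^\top \mathbf{e}_i = \beta\}$, one sees $F = \conv\left(\bigcup_{i \in S} \bar{\Po}_i\right)$. A projection--fiber dimension computation (exploiting the full-dimensional $\Po_i$) gives $\dim F = (|S|-1) + d$, so the facet requirement $\dim F = d+n-1$ forces $|S| = n$. Since $\Delta$'s only facets are $z_j \geq 0$ for $j \in N$ and $\sum_{i \in N} z_i \leq 1$, this case is complete.

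In the case $\alpha \neq 0$, I would first show that $F$ meets every $\bar{\Po}_i$. If $F \cap \bar{\Po}_m = \emptyset$ for some $m$, then every point of $F$ has $z_m = 0$ (since the defining inequality is tight on layer $\mathbf{e}_i$ only when $F \cap \bar{\Po}_i \neq \emptyset$), so $F \subseteq \{z_m = 0\} \cap \D$; because $\D$ is full-dimensional this containing set is a proper face of dimension at most $d+n-1$, and equality is forced by $F$ being a facet, yielding $F = \{z_m = 0\} \cap \D$ and hence (by uniqueness of facet-describing inequalities) $\alpha = 0$, a contradiction. Consequently $\gamma_i = M_i := \min\{\beta - \alpha^\top x : x \in \Po_i\}$ for every $i \in N$, and the inequality is precisely the full optimal big-M lifting $L_\alpha$, which one checks gives the same expression regardless of whether it arises from Case 1 or Case 2 of the lifting framework. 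Setting $F_k := \{x \in \Po_k : \alpha^\top x = \beta - \gamma_k\}$ with $\gamma_0 := 0$, the facet decomposes as $F = \conv\left(\bigcup_{k \in N_0} F_k \times \{\mathbf{e}_k\}\right)$, and the same projection--fiber count gives $\dim F = n + d'$, where $d'$ is the dimension of the span of the direction spaces of the $F_k$'s. All those directions lie in the $(d-1)$-dimensional hyperplane $\alpha^\perp$, so $d' \leq d-1$, and $\dim F = d+n-1$ forces equality. For $d = 1$ this is automatic, and any full-dimensional $\Po_k$ has $F_k$ equal to one of its endpoints, which is a facet. For $d = 2$, $\alpha^\perp$ is one-dimensional, so $d' = 1$ forces some $F_k$ itself to be one-dimensional; for a full-dimensional $\Po_k$ this means $F_k$ is an edge, i.e., a facet. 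In both cases $L_\alpha$ is the listed full optimal big-M lifting of that facet-describing inequality of $\Po_k$.

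The main obstacle is the projection--fiber dimension identity $\dim F = n + d'$. I would establish it in the same style as the affine-independence bookkeeping in the proofs of Theorems \ref{lem:fulldim}--\ref{lem:sumz}: pick a relative-interior point of each nonempty $F_k$, shift to a common anchor, and check that the independent directions decouple into an $x$-part (living in $\alpha^\perp$) and a $z$-part (spanning the affine hull of $\{\mathbf{e}_i : i \in S\}$). The restriction $d \leq 2$ is sharp here, because for $d \geq 3$ the direction spaces of several lower-dimensional $F_k$'s can jointly span $\alpha^\perp$ without any single $F_k$ being a facet of its $\Po_k$; this is exactly the obstruction that \S\ref{sec:standard} addresses via MIR.
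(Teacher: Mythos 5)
Your proposal is correct, and it reaches the conclusion by a somewhat different route than the paper. The paper works directly with $n+d$ affinely independent extreme points on a facet of $\D$ and does a case analysis on how they distribute among the layers $\bar{\Po}_0,\ldots,\bar{\Po}_n$ (its ``signature''): if some layer is missed the facet is $z_j\geq 0$ or $\sum_{i\in N}z_i\leq 1$, and otherwise, for $d=1$ the inequality is reconstructed explicitly by plugging the points into $\gamma_1 x_1+\sum_i\mu_iz_i\leq\rho$, while for $d=2$ the two points sharing a layer span an edge of that $\Po_\ell$ and a coefficient-matching plus validity/domination argument (invoking Theorem \ref{lem:facet_lift}) identifies the facet with the lifting of that edge's inequality. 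You instead split on whether the $x$-part $\alpha$ vanishes, dispose of $\alpha=0$ via the simplex projection, and for $\alpha\neq 0$ prove the structural lemma that a vertical facet meets every $\bar{\Po}_i$, which immediately forces $\gamma_i=M_i$ and identifies the inequality as a full optimal lifting of some supporting inequality; the exact identity $\dim F=n+d'$ then reduces everything to showing some tight face $F_k$ is a facet of its $\Po_k$ when $d\leq 2$. What your route buys is a cleaner separation of the two mechanisms (every vertical facet is a lifting of \emph{some} valid inequality, in any dimension; only the facet-describing property of the underlying inequality is special to $d\leq 2$), it avoids Theorem \ref{lem:facet_lift}, and it makes the obstruction for $d\geq 3$ (Remark \ref{rem:falls}) transparent; the paper's argument is shorter and more computational. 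Three small points to tidy, none fatal: in the ``$F$ meets every $\bar{\Po}_m$'' step the case $m=0$ needs the parallel statement $F\subseteq\{\sum_{i\in N}z_i=1\}\cap\D$ (there is no variable $z_0$), yielding the same contradiction with $\alpha\neq 0$; in the $\alpha=0$ case you only need the upper bound $\dim F\leq(|S|-1)+d$ (the equality you state uses full-dimensionality of all $\Po_i$, which the theorem's hypotheses do not grant), and $|S|=n$ then follows since $|S|=n+1$ would make the inequality trivial; and in the $d=2$ step your argument, like the paper's, tacitly uses that the particular $\Po_k$ whose $F_k$ is one-dimensional is itself full dimensional --- this is the standing assumption of \S\ref{sec:def}, so you are on the same footing as the paper's own proof.
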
   

\begin{proof}\!\!.
By Proposition \ref{lem:fulldim},
$\D$ is a full-dimensional polytope in $\mathbb{R}^{n+d}$, so
every one of its facets 
$\mathcal{F}$
contains $n+d$ affinely independent points of the form
$\genfrac(){0pt}{1}{\hat{x}^i}{\mathbf{e}_i}$, where each such point has $\hat{x}^i$ being an extreme point of a 
$\Po_i$\,. In fact, at most $d+1$ of these can be associated with the same $i$, for each $i\in N_0$\, as the 
different $\hat{x}^i$ for the same $i$ must be affinely independent. 

For $d=1$, we have $n+d=n+1$, 
and one possibility is that the $n+1$ points partition, one for each $i\in N_0$\,. In this case, we can view the 
inequality describing the facet $\mathcal{F}$ as an optimal sequential lifting, starting from an
inequality describing a facet $\mathcal{F}_0$ of $\Po_0$\,. The reason for this is as follows. 
Suppose that
\begin{equation}
\gamma_1 x_1 +\sum_{i\in N} \mu_i z_i \leq \rho
\tag{$*_{F_1}$}\label{starF1}
\end{equation}
describes the facet $\mathcal{F}$, and the $n+1$ points
have the form 
$\genfrac(){0pt}{1}{\hat{x}^i}{\mathbf{e}_i}$,  for $i\in N_0$\,. Plugging $\genfrac(){0pt}{1}{\hat{x}^0}{\mathbf{e}_0}$ into \eqref{starF1} as an equation,
we see that $\rho=\gamma_1 \hat{x}^0$.
Then plugging in $\genfrac(){0pt}{1}{\hat{x}^i}{\mathbf{e}_i}$ for $i\in N$, we see that
$\mu_i=\rho-\gamma_1 \hat{x}^i$, 
 for $i\in N$. So, \eqref{starF1} takes the form
 \begin{equation*}
\gamma_1 x_1 + \gamma_1 \sum_{i\in N} \left( \hat{x}^0 - \hat{x}^i\right) z_i \leq \gamma_1 \hat{x}^0.  
\end{equation*}
Clearly we cannot have $\gamma_1=0$ (the inequality vanishes). For $\gamma_1\not=0$,
the inequality becomes
\begin{equation*}
x_1 +  \sum_{i\in N} \left( \hat{x}^0 - \hat{x}^i\right) z_i ~\left\{\genfrac{}{}{0pt}{1}{\leq}{\geq}\right\}~ \hat{x}^0,  
\end{equation*}
which is a full lifting of  $x \left\{\genfrac{}{}{0pt}{1}{\leq}{\geq}\right\} \hat{x}^0$, 
which must describe a facet of $\Po_0$ or it would be invalid.  

If, instead, the points do not partition like this, then a $\Po_j$ is missed, for some $j\in N_0$. If $j \in N$, then we can view the facet $\mathcal{F}$ as described by $z_j \geq 0$ (corresponding to the missed $\Po_j$).
If, instead, $j=0$, then we can view the facet $\mathcal{F}$ as described by $\sum_{i\in N} z_i \leq 1$.

For $d=2$, we have $n+d=n+2$, 
and one possibility is that the $n+2$ points partition as
two for some $\ell\in N_0$\,, 
and one for each of the remaining $i\in N_0\setminus\{\ell\}$. 
Now, suppose that \begin{equation}
\gamma_1 x_1 + \gamma_2 x_2 + \sum_{i\in N} \mu_i z_i \leq \rho \tag{$*_{F_2}$}\label{starF2}
\end{equation} describes the facet $\mathcal{F}$.
Without loss of generality, 
we can take the $n+2$ points to have the form $\genfrac(){0pt}{1}{\tilde{x}^\ell}{\mathbf{e}_l}$, and  $\genfrac(){0pt}{1}{\hat{x}^i}{\mathbf{e}_i}$  for $i\in N_0$\,. 
 Then, plugging in
 $\genfrac(){0pt}{1}{\tilde{x}^\ell}{\mathbf{e}_l}$ and
 $\genfrac(){0pt}{1}{\hat{x}^\ell}{\mathbf{e}_l}$ into 
 \eqref{starF2} as an equation,
 we have $\gamma_1 \tilde{x}^\ell_1 + \gamma_2 \tilde{x}^\ell_2 +\alpha_\ell z_\ell = \rho$, and
$\gamma_1 \hat{x}^\ell_1 + \gamma_2 \hat{x}^\ell_2 +\alpha_\ell z_\ell = \rho$.
Subtracting,
 we obtain
 $\gamma_1 (\tilde{x}^\ell_1 -  \hat{x}^\ell_1) + \gamma_2 (\tilde{x}^\ell_2 - \hat{x}^\ell_2) = 0$.
Now, consider that 
$\tilde{x}^\ell$ and $\hat{x}^\ell$ are extreme points of a facet (i.e., edge) $\mathcal{F}_\ell$
of $\Po_\ell$\,. 
Suppose that 
$\mathcal{F}_\ell$ is described by $\alpha_1 x_1 +\alpha_2 x_2 \leq \beta$. 
 Then, plugging in 
$\tilde{x}^\ell$ and $\hat{x}^\ell$ into this inequality as an equation,
 we have $\alpha_1 \tilde{x}^\ell_1 + \alpha_2 \tilde{x}^\ell_2 = \beta$, and
 $\alpha_1 \hat{x}^\ell_1 + \alpha_2 \hat{x}^\ell_2 = \beta$.
 Subtracting, we get
 $\alpha_1 (\tilde{x}^\ell_1 -  \hat{x}^\ell_1) + \alpha_2 (\tilde{x}^\ell_2 - \hat{x}^\ell_2) = 0$.
So, we can assume, without loss of generality, that
$\gamma_1=\alpha_1$ and $\gamma_2=\alpha_2$\,.
Finally, validity of \eqref{starF2}
on $\bar{\Po}_0$
implies that $\rho \geq \max\{\alpha_1x_1 + \alpha_2 x_2 ~:~ x \in \Po_0\}=\beta-M_0$\,,
and validity of \eqref{starF2}
on $\bar{\Po}_i$\,, $i\in N$, 
implies that 
$\mu_i \leq \rho - 
\max\{\alpha_1x_1 + \alpha_2 x_2 ~:~ x \in \Po_i\}\leq (\beta-M_0) - (\beta - M_i)= M_i-M_0$\,.
So, if in fact \eqref{starF2} is dominated by 
the full optimal big-M lifting of $\alpha_1 x_1 +\alpha_2 x_2 \leq \beta$,
and it must in fact be
that  full optimal big-M lifting inequality, due to Theorem \ref{lem:facet_lift}.
If, instead, the points do not partition like this, then a $\Po_j$ is missed, for some $j\in N_0$. If $j \in N$, then we can view the facet as described by $z_j \geq 0$ (corresponding to the missed $\Po_j$).
If, instead, $j=0$, then we can view the facet $\mathcal{F}$ as described by $\sum_{i\in N} z_i \leq 1$.
\end{proof}

\begin{rem}
Considering the case of $d=2$ in the sketch of the proof of Theorem \ref{thm:hull}, we can refer to Figure \ref{fig:ex},
where we have $n=1$. We can observe that generally for $d=2$ and $n=1$, facets of $\D$ are either triangles or trapezoids, with a trapezoid (there is one in the example) arising by lifting from a facet of one polytope $\Po_i$ that is parallel to a facet of the other polytope.  
\end{rem}

\begin{rem}\label{rem:falls}
The argument in the proof of Theorem \ref{thm:hull}
falls apart already for $d=3$ and $n=1$. In such a situation,
a vertical facet must contain 4 affinely-independent points
($\D$ is in $\mathbb{R}^4$), but 2 could come from each of 
$\Po_0$ and $\Po_1$\,. In such a case, they do not affinely span a facet of
either (we need 3 affinely independent points on a facet of $\Po_0$ and of $\Po_1$).
So such a vertical facet will not be a lift of a facet-describing inequality 
of $\Po_0$ or $\Po_1$\,.
\end{rem}

In fact, Theorem \ref{thm:hull}
does not extend to $d=3$, as indicated in the 
following result.

\begin{prop}\label{prop:badexample}
For $n=1$ and $d=3$, there can be vertical facets of the 
convex hull $\D$ that are not described by 
 full optimal big-M lifting of facet-describing inequalities defining  $\Po_0$ and $\Po_1$\,, even for a pair of right simplices related by point reflection. 
\end{prop}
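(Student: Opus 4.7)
The plan is to exhibit an explicit example of two right simplices in $\mathbb{R}^3$ related by point reflection, and to produce a concrete facet-describing inequality of $\D$ whose $x$-coefficient vector is not a positive scalar multiple of any facet-describing inequality of $\Po_0$ or of $\Po_1$\,. This suffices because the full optimal big-M lifting of a facet-describing inequality $\alpha^\top x \leq \beta$ of $\Po_k$ leaves the $x$-coefficients $\alpha$ unchanged, so any facet of $\D$ arising in that way must have an $x$-part that is (up to positive scaling) a facet normal of some $\Po_k$.

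Concretely, I would take $\Po_0 := \conv\{0,\mathbf{e}_1,\mathbf{e}_2,\mathbf{e}_3\}$, the standard unit simplex, and $\Po_1$ the point reflection of $\Po_0$ through $p:=(3/2,3/2,3/2)^\top$, so that $\Po_1=\conv\{(3,3,3),(2,3,3),(3,2,3),(3,3,2)\}$. The facet-normals of $\Po_0$ are (up to positive scaling) $-\mathbf{e}_1,-\mathbf{e}_2,-\mathbf{e}_3,\mathbf{e}_1+\mathbf{e}_2+\mathbf{e}_3$, and point reflection forces the facet-normals of $\Po_1$ to be the same (up to sign). Then I would propose the candidate inequality
\begin{equation*}
-x_2-x_3+5z_1\leq 0
\end{equation*}
for $\D$. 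Validity on $\bar{\Po}_0$ (where $z_1=0$) reduces to $x_2+x_3\geq 0$, which holds on $\Po_0$\,; validity on $\bar{\Po}_1$ (where $z_1=1$) reduces to $x_2+x_3\geq 5$, which holds because the minimum of $x_2+x_3$ over the four vertices of $\Po_1$ is $5$, attained at $(3,2,3)$ and $(3,3,2)$.

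To show it describes a facet, I would exhibit the four points $\binom{0}{0},\binom{\mathbf{e}_1}{0},\binom{(3,2,3)^\top}{1},\binom{(3,3,2)^\top}{1}\in\mathbb{R}^4$, check that each satisfies the inequality at equality, and verify affine independence in $\mathbb{R}^4$ by computing that the three difference vectors are linearly independent (a direct $3\times 4$ determinant/row-reduction check). Since $\D$ is full dimensional by Theorem \ref{lem:fulldim}, four affinely independent points on a valid inequality certify a facet.

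Finally, to complete the argument, I would observe that the $x$-coefficient vector of the inequality is $(0,-1,-1)^\top$, and that no positive scalar multiple of any facet normal of $\Po_0$ or $\Po_1$ (each of which lies in $\{\pm\mathbf{e}_1,\pm\mathbf{e}_2,\pm\mathbf{e}_3,\pm(\mathbf{e}_1+\mathbf{e}_2+\mathbf{e}_3)\}$, up to sign) equals $(0,-1,-1)^\top$. Hence this facet is not obtainable by full optimal big-M lifting from any facet-describing inequality of $\Po_0$ or of $\Po_1$\,. The only real obstacle here is choosing the reflection center so that the resulting simplices produce the ``diagonal'' facet cleanly; the routine validity and affine-independence verifications are then straightforward arithmetic. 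This matches exactly the scenario foreseen in Remark \ref{rem:falls}: two vertices of the facet come from each simplex, the spanned edges (direction $\mathbf{e}_1$ in $\Po_0$ and $\mathbf{e}_2-\mathbf{e}_3$ in $\Po_1$) are non-parallel, and the resulting vertical facet is genuinely new.
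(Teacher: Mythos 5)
Your proposal is correct, and it takes a genuinely different route from the paper's. The paper also uses a right simplex and its point reflection (it reflects through $(\tfrac{5}{2},\tfrac{5}{2},\tfrac{5}{2})$, i.e.\ the family of \S\ref{sec:standard} with $d=3$, $a=1$, $b=5$), but its argument is a counting one resting on a computation: citing software such as \cite{cdd}, it reports that $\D$ has $14$ vertical facets, while at most $8$ inequalities can arise as full optimal big-M liftings (one per facet-describing inequality of $\Po_0$ and $\Po_1$), so some vertical facets cannot be liftings; no particular offending facet is exhibited in the proof itself (the explicit ones appear only afterwards, in Remark \ref{rem:badsimplices}). You instead give a fully explicit, software-free certificate for a single offending facet: validity of $-x_2-x_3+5z_1\leq 0$ is checked on the eight lifted vertices, tightness at four affinely independent points plus full-dimensionality of $\D$ (Theorem \ref{lem:fulldim}) certifies a facet, and since both lifting templates \eqref{eq:star0} and \eqref{eq:stark} leave the $x$-coefficients $\alpha$ unchanged while facet-describing inequalities of the full-dimensional $\D$ are unique up to positive scaling, any lifted facet would have $x$-part a positive multiple of a normal in $\{\pm\mathbf{e}_1,\pm\mathbf{e}_2,\pm\mathbf{e}_3,\pm(\mathbf{e}_1+\mathbf{e}_2+\mathbf{e}_3)\}$, which $(0,-1,-1)^\top$ is not; it is also worth stating explicitly that this facet is vertical, which is immediate for the same uniqueness reason since its describing inequality has nonzero $x$-part. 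Your instance is in fact the same family with $b=3$ and the roles of $\Po_0,\Po_1$ swapped, and your inequality is one of the ``extra'' facets identified in general in Theorem \ref{thm:allfacetssimplex} (the case $m=2$), so it is fully consistent with the paper's later analysis. The trade-off: your argument is self-contained and hand-verifiable, whereas the paper's computation buys the complete facet census ($14$ vertical facets, $6$ of them non-lifted) that motivates Remark \ref{rem:badsimplices} and \S\ref{sec:standard}. (Minor wording: a $3\times 4$ matrix has no determinant; a rank/row-reduction check of the three difference vectors is what you mean, and it does succeed.)
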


\begin{proof}\!\!.
This only requires an example. 
Let 
\[
\Po_0 := \{ x \in \mathbb{R}^3 ~:~  x_1\,,x_2\,,x_3 \leq 5,~ x_1+x_2+x_3 \geq 14
\},
\]
and let 
\[
\Po_1 := \{ x \in \mathbb{R}^3 ~:~  x_1\,,x_2\,,x_3 \geq 0,~ x_1+x_2+x_3 \leq 1 
\}.
\]
Notice that $\Po_0$ and $\Po_1$ are
related by point reflection through the point
$(\frac{5}{2},\frac{5}{2},\frac{5}{2})$. 
It can be checked (using software like \cite{cdd})
that $\D$ has 14 vertical facets,
but there cannot be more than 8 full optimal big-M liftings of facet-describing inequalities, because the total number of facet-describing inequalities
for $\Po_0$ and $\Po_1$ is 16.  
\end{proof}

\begin{rem}\label{rem:badsimplices}
Analyzing the example
in the proof of Proposition
\ref{prop:badexample}
in a bit more detail, there are 6 facets that do not come 
from lifting facet-describing inequalities of the $\Po_i$\,. Namely, those described by
$ 9 -9z_1 \leq x_i+x_j \leq 10-9z_1$\,, for each of the three choices of 
distinct pairs $i,j \in \{1,2,3\}$.
In fact, they can be seen as coming from 
\emph{face}-describing inequalities, namely: $x_i+x_j \leq 1$
and  $x_i+x_j \geq -9$. This is not a good thing!
Solving appropriate linear-optimization problems,
we can check full-dimensionality 
(see \cite{FreundRoundyTodd}) 
and remove redundant inequalities (obvious). 
But we cannot realistically handle (for large $d$) all face-describing inequalities for
each $\Po_i$\,.   

\medskip

It turns out that for this example, the facet-describing inequalities of $\D$ that are not optimal liftings of facet-describing inequalities of the $\Po_i$ are MIR (mixed-integer rounding) inequalities relative to 
weighted combinations of 
optimal liftings of facet-describing inequalities of the $\Po_i$\,.

MIR inequalities are well known
for their high value in tightening MILP formulations; see \cite{GurobiMIR,cornrio,NemhauserWolsey1990,MIR_Closure,ML4MIR}, for example. 
In our notation, 
adapting from \cite[Theorem 8]{cornrio}, 
we
consider a set of the form
\[
S:=\left\{
(x,z)\in \mathbb{R}^d_+\times \{0,1\}^n ~:~
\alpha^\top x + \gamma^\top z \leq \beta
\right\},
\]
and then associated MIR inequality is
\[
\sum_{j\in N} \left(
\lfloor \gamma_j\rfloor + \frac{(f_j-f_0)^+}{1-f_0}
\right) z_j
+
\frac{1}{1-f_0}
\sum_{i ~:~ \alpha_i<0}
\alpha_i x_i \leq \lfloor \beta \rfloor,
\]
where $f_0:=\beta-\lfloor \beta \rfloor$,
and $f_j:=\gamma_j-\lfloor \gamma_j \rfloor$,
for $j\in N$. 

We will demonstrate that,
for the example from the proof of Proposition 
\ref{prop:badexample}, 
all of the facet-describing
inequalities, beyond $0\leq z_1  \leq 1$ 
and optimal full big-M liftings of facet-describing inequalities of the $\Po_i$\,, 
are in fact MIR inequalities.

Starting from the facet-describing inequalities of $\Po_0$\,,
we consider the facet-describing inequalities described by full optimal big-M liftings:
\begin{align}
    x_1 + 4 z_1 &\leq 5 \label{eq:p1}\tag{S.1} \\
    x_2 + 4 z_1 &\leq 5 \label{eq:p2}\tag{S.2} \\
    x_3 + 4 z_1 &\leq 5 \label{eq:p3}\tag{S.3} \\
    -x_1 - x_2 - x_3 - 14 z_1&\leq -14. \label{eq:p4} \tag{T}
\end{align}
Next, we consider the  weighted combinations  $\frac{1}{10}(S.i) + \frac{1}{10}(\ref{eq:p4})$, 
for $i=1,2,3$:
\begin{align*}
-\frac{1}{10} x_2 - \frac{1}{10} x_3 - z_1 & \leq -\frac{9}{10}\\
-\frac{1}{10} x_1 - \frac{1}{10} x_3 - z_1 & \leq - \frac{9}{10}\\
-\frac{1}{10} x_1 - \frac{1}{10} x_2 - z_1 & \leq - \frac{9}{10}.
\end{align*}
Forming the MIR inequalities based on these inequalities and simplifying,
we arrive at
$ 9 -9z_1 \leq x_i+x_j$,
for each pair of distinct $i$ and $j$.

It is easy to check that the mapping
$x_i \rightarrow 5-x_i$ 
is an affine (even unimodular) involution between
$\Po_0$ and $\Po_1$\,;
and extending the mapping to 
$z_1 \rightarrow 1-z_1$\,,
we have an affine  (even unimodular) involution between
$\bar{\Po}_0$ and $\bar{\Po}_1$\,.
So, we can map $\bar{\Po}_1$ to $\bar{\Po}_0$\,,
apply the MIR process exactly as above, and then
map back to $\bar{\Po}_1$\,.
In this way, we derive the
three  inequalities $x_i+x_j \leq 10-9z_1$ (for each pair of distinct $i$ and $j$)
from the MIR process.
\end{rem}


\section{Computing all facets efficiently, in fixed dimension \texorpdfstring{$d$}{d}}\label{sec:allfacets}

We have characterized all facet-describing inequalities of $\D$ for  $d\in\{1,2\}$, and it is easy to see that we can efficiently calculate them. We have also seen that for $d=3$ already,
we do not have a characterization. Nonetheless,
we will see now that for any fixed $d$, we can efficiently calculate all facet-describing inequalities of $\D$.

\begin{thm}\label{thm:computing}
    For $n+1$ polytopes $\Po_i \in \mathbb{R}^d$,  $i \in N_0$\,,
    given by inequality descriptions, we can compute 
    a facet-describing inequality system for $\D$ in time polynomial
    in the input size, considering $d$ to be fixed.
\end{thm}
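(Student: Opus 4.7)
The plan is to reduce the enumeration of facets of $\D$ to the enumeration of certain equivalence classes of directions $\alpha\in\mathbb{R}^d$, and then to exploit that, for fixed $d$, the relevant combinatorial structure on $\mathbb{R}^d$ has polynomial complexity. By Theorems~\ref{lem:nonneg} and~\ref{lem:sumz} (all $\Po_i$ are full-dimensional by assumption), the non-vertical inequalities $z_j\geq 0$ for $j\in N$ and $\sum_{j\in N}z_j\leq 1$ are facet-describing and can be emitted directly. Every other facet-describing inequality of $\D$ has the form $\alpha^\top x + \sum_{i\in N}\mu_i z_i\leq \beta$ with $\alpha\neq 0$. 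Writing $h_{\Po_i}(\alpha):=\max\{\alpha^\top x : x\in\Po_i\}$, validity on $\bar{\Po}_i$ requires $\mu_i\leq \beta-h_{\Po_i}(\alpha)$ for $i\in N$ and $\beta\geq h_{\Po_0}(\alpha)$; if any of these were strict, then the facet would avoid the corresponding $\bar{\Po}_i$, and the same sort of dimension counting used in the proofs of Theorems~\ref{lem:fulldim}--\ref{lem:sumz} would force the facet to have dimension at most $d+n-2$---a contradiction. So $\beta=h_{\Po_0}(\alpha)$ and $\mu_i=h_{\Po_0}(\alpha)-h_{\Po_i}(\alpha)$ for $i\in N$; the inequality is precisely the full optimal big-M lifting of \S\ref{sec:def} (Case~1) associated to $\alpha$, and up to positive scaling the facet is determined by $\alpha$.

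Next I would enumerate directions. Letting $G_i(\alpha):=\argmax\{\alpha^\top x : x\in\Po_i\}$, the tuple $(G_0(\alpha),\ldots,G_n(\alpha))$ is constant on each relatively open cell of the common refinement $\mathcal{A}$ of the normal fans $\mathcal{N}(\Po_0),\ldots,\mathcal{N}(\Po_n)$. The arrangement $\mathcal{A}$ is cut out in $\mathbb{R}^d$ by the hyperplanes through the origin perpendicular to the edges of the $\Po_i$. For fixed $d$, the vertices and edges of each $\Po_i$ can be enumerated in polynomial time (by the upper-bound theorem, the face numbers of a $d$-polytope are polynomial in the number of its facets), so the total number $H$ of such hyperplanes is polynomial in the input, and $\mathcal{A}$ has $O(H^d)$ cells---computable in polynomial time by standard algorithms for hyperplane arrangements in fixed dimension.

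For each cell $C$ of $\mathcal{A}$, read off the common tuple $(G_0,\ldots,G_n)$ and let $V_i$ be the linear span of $\{g-g_i^0 : g\in G_i\}$ for any $g_i^0\in G_i$. A short calculation, using that the $\bar{G}_i$ sit in parallel affine subspaces $\{z=\mathbf{e}_i\}$ with the $\mathbf{e}_i$ affinely independent, gives
\[
\dim\conv\bigcup_{i\in N_0}\bar{G}_i \;=\; \dim\sum_{i\in N_0} V_i \;+\; n,
\]
so the candidate is facet-describing exactly when $\dim\sum_i V_i = d-1$ and one can find (via a polynomial-time LP) a nonzero $\alpha_C\in(\sum_i V_i)^\perp$ satisfying $G_i=\argmax\{\alpha_C^\top x : x\in\Po_i\}$ for each $i$. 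When both conditions hold, emit
\[
\alpha_C^\top x + \sum_{i\in N}\bigl(h_{\Po_0}(\alpha_C)-h_{\Po_i}(\alpha_C)\bigr)z_i \;\leq\; h_{\Po_0}(\alpha_C).
\]
Distinct cells produce distinct tuples $(G_0,\ldots,G_n)$ and hence distinct facets of $\D$, so no deduplication is needed.

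The main obstacle, and the reason fixing $d$ is essential, is the complexity of the arrangement $\mathcal{A}$: in unbounded dimension, even a single normal fan can have super-polynomially many cones, and an arrangement of $H$ hyperplanes in $\mathbb{R}^d$ has $\Theta(H^d)$ cells in the worst case. All remaining steps---vertex/edge enumeration for each $\Po_i$, arrangement construction in fixed $d$, LPs for representative $\alpha_C$, and linear-algebra rank tests---run in polynomial time by well-known subroutines, yielding the claimed polynomial overall bound in the input size.
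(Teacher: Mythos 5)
Your proof is correct in substance, but it takes a genuinely different route from the paper's. The paper works entirely in the extended space $\mathbb{R}^{d+n}$: it assigns to each vertical facet a signature $(\sigma_0,\ldots,\sigma_n)$ counting how many of its $n+d$ affinely independent extreme points come from each $\bar{\Po}_i$, observes that every $\sigma_i\geq 1$ and $\sigma_i\leq d$, so that there are only $O(n^{d-1})$ signatures for fixed $d$, and then enumerates candidate point/face selections conforming to each signature, computes the hyperplane they span, and keeps it when all extreme points of $\D$ lie on one side. You instead reduce to the $x$-space: you show that a vertical facet must meet every $\bar{\Po}_i$ (the same observation behind $\sigma_i\geq 1$), so that its describing inequality is the full optimal big-M lifting of the supporting inequality $\alpha^\top x\leq h_{\Po_0}(\alpha)$ and is determined by the direction $\alpha$ alone; then you enumerate candidate directions via the arrangement of edge-orthogonal hyperplanes and test facetness by the rank condition $\dim\sum_i V_i=d-1$. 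Your route buys a very transparent complexity bound---all enumeration happens over the $O(H^d)$ faces of an arrangement of polynomially many hyperplanes in the fixed dimension $d$, with the $z$-coefficients forced by lifting, so the polynomial dependence on $n$ is immediate---and it also recovers the structural fact (cf.\ Remark \ref{rem:badsimplices}) that every vertical facet is a lifting of a face-describing (not necessarily facet-describing) inequality of $\Po_0$. The paper's route is more elementary, needing only affine independence and a one-sidedness check with no normal-fan or arrangement machinery, though its per-signature enumeration involves selections from each of the $n+1$ polytopes and requires more care to organize.

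A few spots to tighten. First, your $\mathcal{A}$ is not the common refinement of the normal fans but a refinement of it (a wall of a normal fan is only part of the corresponding edge-orthogonal hyperplane), so distinct cells can produce the same tuple $(G_0,\ldots,G_n)$; your claim that no deduplication is needed therefore fails, although duplicates are trivially removable and harmless. Relatedly, you should say explicitly that you enumerate faces of $\mathcal{A}$ of all dimensions, not only full-dimensional cells: the direction of, say, a lifted facet-describing inequality of $\Po_0$ lies on a one-dimensional face, and for $d\geq 2$ the condition $\dim\sum_i V_i=d-1$ can only hold on lower-dimensional faces (your $O(H^d)$ count is consistent with this reading, but it needs to be stated). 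Second, the argument that a vertical facet meets every $\bar{\Po}_i$ is cleaner than your dimension count suggests: if it misses $\bar{\Po}_j$ with $j\in N$, then all of its extreme points have $z_j=0$, so it is contained in, hence equal to, the facet described by $z_j\geq 0$, contradicting verticality; similarly with $\sum_{i\in N}z_i\leq 1$ when $\bar{\Po}_0$ is missed. The same style of argument also disposes of the case $\alpha=0$, which you assert but do not justify.
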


\begin{proof}\!\!.
Without loss of generality,
we can assume that $d \geq 3$, due to Theorem \ref{thm:hull}. 
Of course we need the non-vertical facets, but there are only $n+1$ of them.

    Suppose now that $\mathcal{F}$ is a vertical facet of $\D$. The facet $\mathcal{F}$ must containing $n+d$ affinely-independent points, and 
    without loss of generality, we 
    can take them to be extreme points of $\D$. We define 
    the \emph{signature} $\sigma=(\sigma_0,\sigma_1,\ldots,\sigma_n)$, where $\sigma_i$ is the number of points coming from $\Po_i$\,. 
    Note that $\sigma$ is not generally uniquely determined by $\mathcal{F}$.
    Because $\mathcal{F}$ is a vertical facet, 
    every $\sigma_i$ is at least one.
    The number of possible 
    distinct such $\sigma$ is exactly $\genfrac(){0pt}{1}{n+d-1}{n}$ (the number of ways to place $n+d-1$ unlabeled balls in $n+1$ labeled urns, with
    at least one ball in each urn).
    For constant $d$, we have 
   $\genfrac(){0pt}{1}{n+d-1}{n} =\mathcal{O}(n^{d-1})$. We can also note that each $\sigma_i \leq d$. 

    Now, if $\sigma_i$ points come from $\Po_i$\,, then their affine span intersects
    $\Po_i$ on an 
    $(\sigma_i-1)$-dimensional face, and we can enumerate all such possible $(\sigma_i-1)$-dimensional faces  of each $\Po_i$ in time
    polynomial in the encoding of $\Po_i$\,. 
    If $\bm{A}_i$ has $m_i$ rows, then a crude upper bound on the number of 
    $(\sigma_i-1)$-dimensional faces  is $\genfrac(){0pt}{1}{m_i}{d-\sigma_i+1}$, which
    is polynomial in $m_i$ for
    fixed $d$, and there are many easy ways to make this constructive. 
    For example, we can first enumerate the set $\mathcal{X}_i$ of extreme points of each $\Po_i$\,, for $i\in N_0$\,. We also enumerate all of the 
    possible signatures $\sigma$. 
    For any given  
    signature $\sigma$, we test
   each selection $S_0\,, S_1\,,\ldots,S_n$ of extreme points of the 
    $\Po_i$ that follows $\sigma$ (i.e., $|S_i|=\sigma_i$). 
    We check if the $n+d$ extreme points $\genfrac(){0pt}{1}{\hat x^i}{\mathbf{e}_i}\in \mathcal{X}_i$\,,
    $i\in N_0$\,.
    If they are, we compute the equation of the hyperplane
    that describes their affine span. Then
    we just check that all of the extreme points of the 
    points $\genfrac(){0pt}{1}{\hat x^i}{\mathbf{e}_i}$, with $\hat x^i\in E_i$\,, $i\in N_0$\, are in one of the two closed half spaces bounded by the hyperplane.
\end{proof}

  \begin{rem}
  Referring to the proof,
  the facets $\mathcal{F}$ (of $\D$) that 
  correspond to optimal full big-M liftings of facet-describing 
  inequalities of the $\Po_i$
  correspond to $\sigma$
  having one component, say $\sigma_j$\,, equal
  to $d$ and the other $n$ components equal to 1.
  There are only $n$ such signatures $\sigma$, but each of them leads to a facet of $\D$ for each facet of the 
  associated $\Po_j$\,.
  \end{rem}


\section{A standard simplex and its reflection}\label{sec:standard}
Next, we take up a generalization of the 
polytopes from the proof of Proposition \ref{prop:badexample}.
For real $a>0$ and $b$, and integer $d \geq 3$,
let 
\[
\Po_0 := \{ x \in \mathbb{R}^d ~:~  x_i \leq b,~ \sum_{i=1}^d x_i \geq db-a\},
\]
and 
\[
\Po_1 := \{ x \in \mathbb{R}^d ~:~  x_i \geq 0,~ \sum_{i=1}^d
x_i \leq a
\}.
\]
Notice that for $d=3$, $a=1$, $b=5$, we get precisely the 
polytopes from the proof of Proposition \ref{prop:badexample} and the ensuing discussion. 
In particular, we observed that
for this case,  the facet-describing inequalities of $\D$ that are not optimal liftings of facet-describing inequalities of the $\Po_i$ are MIR (mixed-integer rounding) inequalities relative to 
weighted combinations of 
optimal liftings of facet-describing inequalities of the $\Po_i$\,. In what follows, 
we will demonstrate that even in the general case, this is true.

First, we need to determine the  the facet-describing inequalities of $\D$ that are not optimal liftings of facet-describing inequalities of the $\Po_i$\,.
Our technique is inspired by the
ideas in \S\ref{sec:allfacets}.

\begin{thm}\label{thm:allfacetssimplex}
    For $d\geq 3$ and $a > 0$, let 
$\Po_0 := \{ x \in \mathbb{R}^d ~:~  x_i \leq b,~ \sum_{i=1}^d x_i \geq db-a\}$
and 
$\Po_1 := \{ x \in \mathbb{R}^d ~:~  x_i \geq 0,~ \sum_{i=1}^d x_i \leq a\}$.    
Then the full optimal big M-lifting of
each facet-describing inequality defining each 
of $\Po_0$ and $\Po_1$\,
together with the inequalities
$0\leq z_1 \leq 1$,
and non-lifting inequalities 
$-\sum_{i \in T_0} x_{i} + (a - (d+1-m) b)z_1 \leq a-(d+1-m)b$ and $\sum_{i \in T_1} x_{i} + \left(  mb-a \right)z_1 \leq mb$, where $T_0\,,T_1 \subseteq \{1, \cdots, d\}$, $|T_0| = d+1-m$, $|T_1| = m$,
and $2\leq m \leq d-1$,
gives the convex hull $\D$.
\end{thm}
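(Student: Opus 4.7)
My plan is to combine the signature analysis of Section~\ref{sec:allfacets} with the face structure of simplices. Because both $\Po_0$ and $\Po_1$ are (reflected) standard simplices, every subset of their extreme points is the vertex set of a unique face, so the affine span of any such selection is immediate. I will also exploit the affine involution $(x,z_1) \mapsto (b\mathbf{1} - x,\, 1-z_1)$, where $\mathbf{1}$ denotes the all-ones vector in $\mathbb{R}^d$: it interchanges $\bar{\Po}_0$ and $\bar{\Po}_1$, hence is a symmetry of $\D$, and a routine substitution shows that it maps the second non-lifting family onto the first.

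First, I will verify that each stated inequality is facet-describing for $\D$. The bounds $0\leq z_1 \leq 1$ are covered by Theorems~\ref{lem:nonneg} and \ref{lem:sumz}, since both $\Po_0$ and $\Po_1$ are full-dimensional; the full optimal big-M liftings are covered by Theorem~\ref{lem:facet_lift}. For the non-lifting inequality $\sum_{i\in T_1} x_i + (mb-a)z_1 \leq mb$, validity is immediate on $\bar{\Po}_0$ (since $x_i \leq b$ and $|T_1|=m$) and on $\bar{\Po}_1$ (since $\sum_{i\in T_1} x_i \leq \sum_{i=1}^d x_i \leq a$). Writing the simplex vertices as $v_0 := b\mathbf{1}$, $v_j := b\mathbf{1} - a e_j$, and $w_j := a e_j$ (where $e_j$ is the $j$-th standard basis vector in $\mathbb{R}^d$), I will exhibit $d+1$ affinely-independent tight points for this inequality, namely $\genfrac(){0pt}{1}{v_0}{0}$ and $\genfrac(){0pt}{1}{v_j}{0}$ for $j \notin T_1$, together with $\genfrac(){0pt}{1}{w_j}{1}$ for $j \in T_1$. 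Affine independence follows by a block-triangular determinant calculation analogous to the proof of Theorem~\ref{lem:fulldim}. The first non-lifting family is handled identically, or more quickly by applying the reflection symmetry above.

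To establish completeness, I will invoke Theorem~\ref{thm:computing}: every vertical facet of $\D$ contains $d+1$ affinely-independent extreme points, split as $d+1-m$ from $\bar{\Po}_0$ and $m$ from $\bar{\Po}_1$, for some $1 \leq m \leq d$. Because each $\Po_i$ is a simplex, a selection of extreme points is determined by whether the apex vertex ($v_0$ or $w_0=0$) is included and which of the remaining simplex vertices are chosen, giving four subcases per signature that I denote (A$'$+A), (A$'$+B), (B$'$+A), (B$'$+B) according to whether $v_0$ and $w_0$ lie in the selection. Writing $T_0 \subseteq \{1,\ldots,d\}$ for the indices of the selected non-apex $v_j$'s and $T_1 \subseteq \{1,\ldots,d\}$ for those of the selected non-apex $w_j$'s, the equations $\alpha^\top x + \mu z_1 = \rho$ on the $v$-vertices force $\alpha$ to be constant on $T_0$ (since $v_j - v_0 = -a e_j$), and similarly constant on $T_1$. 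Subcase-by-subcase analysis then yields: (B$'$+A) requires $T_0 \cup T_1 = \{1,\ldots,d\}$ and produces $\sum_{i\in T_1} x_i + (mb-a)z_1 \leq mb$; (A$'$+B) analogously produces $-\sum_{i\in T_0} x_i + (a-(d+1-m)b)z_1 \leq a-(d+1-m)b$; subcases (A$'$+A) and (B$'$+B) either leave a free component of $\alpha$ (so the chosen points are affinely dependent) or produce a hyperplane that fails to support $\D$, which can be checked by evaluating at a single vertex outside the selection from each polytope (for instance, in (A$'$+A) the candidate $\sum_{i=1}^d x_i + (bd-2a)z_1 = bd-a$ is violated in one direction at $v_0$ and in the other at $w_0$). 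Finally, noting that the (B$'$+A) and (A$'$+B) formulas at $m=1$ and $m=d$ reproduce exactly the full optimal big-M liftings of the facet-describing inequalities of $\Po_0$ and $\Po_1$, the genuinely new facets are precisely those with $2 \leq m \leq d-1$, completing the description.

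The main obstacle I expect is the clean bookkeeping of the four subcases per signature, in particular confirming that the hyperplanes arising from (A$'$+A) and (B$'$+B) do not support $\D$; this requires identifying, for each such candidate, a specific vertex outside the chosen tight set that violates the inequality in either direction. The reflection symmetry reduces the work roughly by half, and the simplex structure ensures that each individual linear solve reduces to elementary arithmetic.
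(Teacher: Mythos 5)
Your proposal is correct and takes essentially the same route as the paper's proof: classify vertical facets by signature, enumerate the possible selections of simplex vertices according to whether each apex is included, solve for the candidate hyperplane in each case, discard the non-supporting candidates (of the forms $x_\ell + b z_1 = b$ and $\sum_{i=1}^d x_i + (db-2a)z_1 = db-a$) by exhibiting vertices of $\D$ on both sides, and exploit the point-reflection symmetry between $\bar{\Po}_0$ and $\bar{\Po}_1$. The only cosmetic differences are that the paper organizes the degenerate selections through two explicit affine-dependence claims and a six-case table (its Cases 3--6 covering the intersecting index sets you fold into ``free component of $\alpha$ or non-supporting hyperplane''), while you verify facetness of the two non-lifting families directly via $d+1$ affinely independent tight points rather than the paper's terser concluding remark.
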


\begin{proof}\!\!.
With $a>0$, it is easy to see that 
$\Po_0$ and $\Po_1$ are full dimensional.
Suppose that 
$\mathcal{F}$ is a vertical facet 
of $\D$, but
\emph{not} a facet described by 
an optimal big M-lifting of
a facet-describing inequality for one of the  $\Po_i$\,. 

The polytope $\D$ is in $\mathbb{R}^{d+1}$, so our hypotheses imply that  $\mathcal{F}$
must have $(d+1-m,m)$ as a signature, where $ 2\leq m \leq d-1$; note that (i) signature $(d,0)$ corresponds to $z_1=0$, (ii) signature $(0,d)$ corresponds to $z_1=1$, (iii) signature $(d,1)$
corresponds to a lifting of a facet-describing inequality of $\Po_0$\,, and 
(iii) signature $(1,d)$
corresponds to a lifting of a facet-describing inequality of $\Po_1$\,.
Therefore, for some $m$ satisfying, $ 2\leq m \leq d-1$,
$\mathcal{F}$ must contain 
$d+1-m$  points of $\D$, of the form
$\genfrac(){0pt}{1}{\hat{x}}{0}$,
where $\hat x$ is an extreme point of $\Po_0$
and $m$  points of $\D$, of the form
$\genfrac(){0pt}{1}{\hat{x}}{1}$
where $\hat x$ is an extreme point of $\Po_1$\,.

The extreme points from $\bar{\Po}_0$ are 
\begin{equation*} 
\genfrac(){0pt}{1}{b\mathbf{e}}{0} \mbox{ and }
\genfrac(){0pt}{1}{b\mathbf{e}-a\mathbf{e}_i}{0}
\mbox{ for } i=1,\ldots,d,
\end{equation*} 
and the extreme points from $\bar{\Po}_1$ are 
\begin{equation*} 
\genfrac(){0pt}{1}{\mathbf{e}_0}{1} \mbox{ and }
\genfrac(){0pt}{1}{a\mathbf{e}_i}{1}
\mbox{ for } i=1,\ldots,d.
\end{equation*} 

Accounting for the symmetries,
we can choose from $\bar{\Po}_0$ either 
$\genfrac(){0pt}{1}{b\mathbf{e}}{0}$ and 
$d-m$ distinct points of the form 
$\genfrac(){0pt}{1}{b\mathbf{e}-a\mathbf{e}_i}{0}$, or simply 
$d+1-m$ distinct points of the form 
$\genfrac(){0pt}{1}{b\mathbf{e}-a\mathbf{e}_i}{0}$.
Likewise, 
accounting for the symmetries,
we can choose from $\bar{\Po}_1$ either 
$\genfrac(){0pt}{1}{\mathbf{e}_0}{1}$ and 
$m-1$ distinct points of the form 
$\genfrac(){0pt}{1}{a\mathbf{e}_i}{1}$, or simply 
$m$ distinct points of the form 
$\genfrac(){0pt}{1}{a\mathbf{e}_i}{1}$.
Now, we cannot choose the points from
$\bar{\Po}_0$ and $\bar{\Po}_1$ independently, as the following claims indicate.

\smallskip 

\noindent \underline{Claim 1}:
Suppose that $T_0$ and $T_1$ are
subsets of $\{1,\ldots,d\}$ with 
$|T_0\cap T_1|\geq 2$. Then $\left\{\genfrac(){0pt}{1}{b\mathbf{e}-a\mathbf{e}_i}{0}~:~ i\in T_0\right\}
\cup
\left\{
\vphantom{\genfrac(){0pt}{1}{b\mathbf{e}-a\mathbf{e}_i}{0}}
\genfrac(){0pt}{1}{a\mathbf{e}_i}{1} ~:~ i\in T_1
\right\}
$
is affinely dependent. 

\begin{proof}~\!\!\!\!{\it [of Claim 1].}
It is enough to see that
\[
\genfrac(){0pt}{1}{b\mathbf{e}-a\mathbf{e}_{i_1}}{0}
-
\genfrac(){0pt}{1}{b\mathbf{e}-a\mathbf{e}_{i_2}}{0}
= 
\genfrac(){0pt}{1}{a\mathbf{e}_{i_2}}{1}
-
\genfrac(){0pt}{1}{a\mathbf{e}_{i_1}}{1},
\]
for $T_0\cap T_1 \supset \{i_1\,,\,i_2\}$,
$i_1\not= i_2$\,.
\end{proof}

\noindent \underline{Claim 2}:
Suppose that $T_0$ and $T_1$ are
subsets of $\{1,\ldots,d\}$, with 
$|T_0\cap T_1|\geq 1$. 
Then
$
\left\{
\genfrac(){0pt}{1}{b\mathbf{e}}{0},
\genfrac(){0pt}{1}{\mathbf{e}_0}{1}
\right\}
\cup
\left\{\genfrac(){0pt}{1}{b\mathbf{e}-a\mathbf{e}_i}{0}~:~ i\in T_0\right\}
\cup
\left\{
\vphantom{\genfrac(){0pt}{1}{b\mathbf{e}-a\mathbf{e}_i}{0}}
\genfrac(){0pt}{1}{a\mathbf{e}_i}{1} ~:~ i\in T_1
\right\}
$
is affinely dependent. 

\begin{proof}~\!\!\!\!{\it [of Claim 2].}
It is enough to see that
\[
\genfrac(){0pt}{1}{b\mathbf{e}}{0}
-
\genfrac(){0pt}{1}{b\mathbf{e}-a\mathbf{e}_\ell}{0}
=
\genfrac(){0pt}{1}{a\mathbf{e}_\ell}{1}
-
\genfrac(){0pt}{1}{\mathbf{e}_0}{1},
\]
for any $\ell\in T_0\cap T_1$\,. 
\end{proof}

From these claims, we conclude that there are 
six cases to consider; see Figure \ref{fig:hyper}. We note that for Case 3,
$|T_0|=d-m$, $|T_1|= m-1$, 
and then 
$k$ is defined to be the unique element
in $\{1,\ldots,d\}\setminus (T_0\cup T_1)$. For Case 6, we note that 
the hyperplane equation is independent of
the element $\ell$ in $T_0\cap T_1$\,. 

\begin{figure}
\centering
\resizebox{\textwidth}{!}{
\begin{tabular}{c|c|c|c|c|c}
Case & $\genfrac(){0pt}{1}{b\mathbf{e}}{0}$   & $\genfrac(){0pt}{1}{\mathbf{e}_0}{1}$ & $T_0\cap T_1$ & $ (|T_0|, |T_1|)$ & hyperplane equation\\[3pt]
\hline
1 &\ding{51} & \ding{55} & $\emptyset$ & $(d-m, m)$ & $\sum_{i \in T_1} x_{i} + \left(  mb-a \right)z_1 = mb$ \\
2 &\ding{55} & \ding{51} & $\emptyset$ & $(d+1-m, m-1)$ &  $-\sum_{i \in T_0} x_{i} + (a - (d+1-m) b)z_1 = a-(d+1-m)b$\\
3 &\ding{51} & \ding{51} & $\emptyset$ & $(d-m, m-1)$ & $ x_{k} + b z_1 = b$\\
4 &\ding{51} & \ding{55} & $\{\ell\}$ & $(d-m, m)$ & $ x_{\ell} + b z_1 = b$\\
5 &\ding{55} & \ding{51} & $\{\ell\}$ & $(d+1-m, m-1)$ & $ x_{\ell} + b z_1 = b$\\
6 &\ding{55} & \ding{55} & $\{\ell\}$ & $(d+1-m, m)$ & $-\sum_{i=1}^d x_i + (2a-db)z_1 = a-db$
\end{tabular}
}
\caption{Cases for hyperplanes}\label{fig:hyper}
\end{figure}

In the end, we see that there are four different types of hyperplanes to consider. 
We demonstrate that the two of them  are not facet determining because there is a point in $\D$ on one side and there is another point in $\D$ on the other side:

\smallskip

\noindent \underline{Cases 3, 4, 5}:  $ x_l + bz = b$.
By plugging the point $\genfrac(){0pt}{1}{b\mathbf{e}-a\mathbf{e}_l}{0} \in \D$ into the equation of the hyperplane, the point satisfies the inequality $b-a + b \times 0 < b$. Similarly, substituting the point $\genfrac(){0pt}{1}{a\mathbf{e}_l}{1} \in \D$ yields  $a + 1 \times b > b$.

\smallskip

\noindent \underline{Case 6}: 
$-\sum_{i=1}^d x_i + (2a-db)z = a-db$. By plugging the point $\genfrac(){0pt}{1}{b\mathbf{e}}{0} \in \D$ into the equation of the hyperplane, the point satisfies the inequality $-db + 0 < a-db$. Similarly, substituting the point $\genfrac(){0pt}{1}{\mathbf{e}_0}{1} \in \D$ yields $-0 + 2a-db > a-db$.

\smallskip
Finally, for Cases 1 and 2,
replacing ``$=$'' 
with  ``$\leq$'' in the hyperplane equations, the resulting inequalities
are facet-describing for $\D$
as all of its extreme points satisfy these inequalities.
\end{proof}

\begin{thm}\label{thm:MIRsimplex}
For $d\geq 3$ and $a>0$,
 let 
$\Po_0 := \{ x \in \mathbb{R}^d ~:~  x_i \leq b,~ \sum_{i=1}^d x_i \geq db-a\}$
and 
$\Po_1 := \{ x \in \mathbb{R}^d ~:~  x_i \geq 0,~ \sum_{i=1}^d x_i \leq a\}$.    
Via MIR, starting from other facet-describing inequalities for $\D$, we can obtain the facet-describing inequalities 
$-\sum_{i \in T_0} x_{i} + (a - (d+1-m) b)z_1 \leq a-(d+1-m)b$ and $\sum_{i \in T_1} x_{i} + \left(  mb-a \right)z_1 \leq mb$, where $T_0\,,T_1 \subseteq \{1, \cdots, d\}$, $|T_0| = d+1-m$, $|T_1| = m$, $2\leq m \leq d-1$.
\end{thm}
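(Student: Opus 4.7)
The plan is to derive the two families separately, generalizing the computation worked out in Remark \ref{rem:badsimplices}.

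For the family $-\sum_{i \in T_0} x_i + (a - kb) z_1 \leq a - kb$, set $k := d+1-m$ so that $2 \leq k \leq d-1$ and $|T_0| = k$. First, form the weighted combination of full optimal big-M liftings of facet-describing inequalities of $\Po_0$: take $1/K$ times each of the $d-k$ inequalities $x_i + (b-a) z_1 \leq b$ for $i \in \{1,\ldots,d\}\setminus T_0$, together with $1/K$ times $-\sum_{i=1}^d x_i + (a-db) z_1 \leq a-db$, where $K := kb + (d-k-1)a$. The $x_i$-coefficients for $i \notin T_0$ cancel, and the combined inequality simplifies to
\[
-\tfrac{1}{K}\sum_{i \in T_0} x_i - z_1 \leq \tfrac{a - kb}{K}.
\]

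Next, apply the MIR formula recalled in Remark \ref{rem:badsimplices}. The $z_1$-coefficient $\gamma_1 = -1$ is already an integer, so $f_1 = 0$; and in the regime $a < kb$ (which is the interesting regime, since otherwise the relevant bound $\sum_{i \in T_0} x_i \geq kb - a$ on $\Po_0$ would be implied by $x_i \geq 0$ alone), the right-hand side $\beta = (a-kb)/K$ lies in $(-1,0)$, because $K - (kb-a) = a(d-k) > 0$. Hence $\lfloor \beta \rfloor = -1$ and $1 - f_0 = (kb-a)/K$. Substitution into the MIR formula gives $-\tfrac{1}{kb-a}\sum_{i \in T_0} x_i - z_1 \leq -1$, and multiplying through by $kb - a > 0$ yields exactly the target.

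For the family $\sum_{i \in T_1} x_i + (mb - a) z_1 \leq mb$, the plan is to apply the affine unimodular involution $x \mapsto b\mathbf{e} - x$, $z_1 \mapsto 1 - z_1$ identified in Remark \ref{rem:badsimplices}. This involution swaps $\bar{\Po}_0$ and $\bar{\Po}_1$, so it carries full optimal big-M liftings of $\Po_0$-facets to those of $\Po_1$-facets, and being an integer-preserving affine substitution it commutes with the MIR operator. Applying it to the entire derivation above converts the starting inequalities into the corresponding $\Po_1$-liftings and sends the conclusion $-\sum_{i \in T_0} x_i + (a-kb) z_1 \leq a-kb$ to $\sum_{i \in T_0} x_i + (kb-a) z_1 \leq kb$, which is exactly the $T_1$-family inequality under the identification $T_1 := T_0$, $m := k$.

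The bulk of the work is bookkeeping: verifying the cancellations in the combined inequality, the MIR arithmetic for $f_0$ and $1 - f_0$, and the action of the involution on the target. The one genuine technical wrinkle is that the MIR formula as recalled presumes $x \geq 0$, which holds on $\D$ only in the regime $b \geq a$; in the opposite regime the clean remedy is to precede the argument by an integer translation of the $x$-variables placing $\D$ inside $\mathbb{R}^d_+$, after which the derivation goes through unchanged, since such translations commute with both full optimal big-M lifting and MIR.
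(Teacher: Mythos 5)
Your derivation of the family indexed by $T_0$ in the regime $a<(d+1-m)b$ is exactly the paper's Case~1 (same weighted combination of the liftings $x_i+(b-a)z_1\le b$, $i\notin T_0$, with $-\sum_{i=1}^d x_i+(a-db)z_1\le a-db$, same divisor $K=(d+1-m)b+(m-2)a$, same MIR arithmetic), and your use of the involution $x\mapsto b\mathbf{e}-x$, $z_1\mapsto 1-z_1$ to obtain the $T_1$ family is also the paper's final step. However, there is a genuine gap: you dispose of the regime $a\ge (d+1-m)b$ with the remark that there the bound ``would be implied by $x_i\ge 0$ alone,'' and you never derive the inequality in that regime. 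This dismissal does not hold up. First, $x_i\ge 0$ is not valid for $\D$: the extreme points $b\mathbf{e}-a\mathbf{e}_i$ of $\Po_0$ have a negative coordinate whenever $a>b$, which is forced in the regime $a>(d+1-m)b\ge 2b$ (with $b>0$). Second, Theorem~\ref{thm:allfacetssimplex} asserts that $-\sum_{i\in T_0}x_i+(a-(d+1-m)b)z_1\le a-(d+1-m)b$ is facet-describing for \emph{all} $a>0$, with no relation imposed between $a$ and $b$; so in the regimes $a>(d+1-m)b$ and $a=(d+1-m)b$ these are still facets of $\D$ and the theorem still requires an MIR derivation of them. The paper treats these as separate Cases~2 and~3, starting from different lifting inequalities (the liftings $-x_i+(a-b)z_1\le a-b$ of the facets of $\Po_1$, summed over $i\in T_0$, and, for the degenerate case $a=(d+1-m)b$, a combination that also uses $-z_1\le 0$); your Case~1 combination cannot work there, since its MIR step relies on $\lfloor (a-kb)/K\rfloor=-1$, which fails when $a\ge kb$.

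A smaller point: your closing ``wrinkle'' about the MIR formula presuming $x\ge 0$ is a fair observation (the paper is silent on it), but the proposed fix is asserted rather than proved. An integer translation $x\mapsto x+t$ shifts the right-hand side of your combined base inequality by $\tfrac{1}{K}\sum_{i\in T_0}t_i$, which is generally not an integer, so $f_0$, $\lfloor\beta\rfloor$, and hence the resulting MIR inequality all change; ``MIR commutes with integer translation'' does not by itself return the target facet, and this step would need to be carried out explicitly if you keep it.
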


\begin{proof}\!\!.
We begin by focusing on 
 the facet $\mathcal{F}$ of $\D$ described by
$-\sum_{i \in T_0} x_{i} + (a - (d+1-m) b)z_1 \leq a-(d+1-m)b$, where $T_0\subseteq \{1, \cdots, d\}$, $|T_0| = d+1-m$, 
and $2\leq m \leq d-1$. Such a facet has $(d-m+1,m)$ as a signature.
We will 
show that the inequality describing $\mathcal{F}$ is an MIR inequality, relative to the optimal big-M liftings of facet-describing inequalities of $\Po_i$\,.

\smallskip

\noindent\underline{Case 1}:
$a < (d+1-m)b$.
We sum the lifting inequalities
$x_{i} + (b - a) z_1 \leq b$
over $i\notin T_0$ together with the
lifting inequality 
$ -\sum_{i=1}^d x_i + (a - db)z_1 \leq a-db$, and then divide by 
$(d+1-m)b + (m-2)a$ to obtain
\begin{align*}
    -\sum_{i \in T_0} \frac{1}{(d+1-m)b + (m-2)a} x_i - z_1 \leq \frac{(m-1)a}{(d+1-m)b + (m-2)a} - 1.
\end{align*}
Because $a < (d+1-m)b$, 
we have $\left\lfloor \frac{(m-1)a}{(d+1-m)b + (m-2)a} - 1 \right\rfloor = -1$. Then, $f_0 = \frac{(m-1)a}{(d+1-m)b + (m-2)a} - 1 - \left\lfloor \frac{(m-1)a}{(d+1-m)b + (m-2)a} - 1 \right\rfloor 
= \frac{(m-1)a}{(d+1-m)b + (m-2)a}$. Because the coefficient of $z_1$ is integer, we do not transform the coefficient of $z_1$ when carrying out the MIR procedure. The resulting MIR inequality is 
\[
     - \frac{1}{1-\frac{(m-1)a}{(d+1-m)b + (m-2)a}} \sum_{i \in T_0} \frac{1}{(d+1-m)b + (m-2)a} x_i - z_1 \leq - 1,
\]
which is equivalent to
\[
    - \sum_{i \in T_0} x_i + (a-(d+1-m)b)z_1 \leq a - (d+1-m)b. 
\]

\smallskip

\noindent\underline{Case 2}:
$a > (d+1-m)b$.
We sum the lifting inequalities
$-x_i + (a-b)z_1 \leq a-b$
over $i \in T_0$, and then divide by 
$r := \frac{a(d+1-m)-b(d+1-m) + (a - (d+1-m) b)}{2}$ to obtain
\begin{align*}
    -\sum_{i \in T_0} \frac{1}{r} x_{i} + \frac{a(d+1-m)-b(d+1-m)}{r} z_1 \leq \frac{a(d+1-m)-b(d+1-m)}{r}.
\end{align*}
We have $\left\lfloor \frac{a(d+1-m)-b(d+1-m)}{r} \right\rfloor = \left\lfloor \frac{2}{1 + \frac{a - (d+1-m) b}{a(d+1-m)-b(d+1-m)}} \right\rfloor = 1$ because $0 < a(d-m)$ and $0 < a - (d+1-m) b < a(d+1-m)-b(d+1-m)$. 
And $1 - f_0 = 1 - \left(\frac{2(a(d+1-m)-b(d+1-m))}{a(d+1-m)-b(d+1-m) + (a - (d+1-m) b)} - 1\right) 
= \frac{2(a - (d+1-m) b)}{a(d+1-m)-b(d+1-m) + (a - (d+1-m) b)}$.
We also have $f_0 = f_j$\,, so the coefficient of $z_1$ is $1$. The MIR inequality is 
\[
    -\frac{a(d+1-m)-b(d+1-m) + (a - (d+1-m) b)}{2(a - (d+1-m) b)} \sum_{i \in T_0} \frac{1}{r} x_{i} + z_1 \leq 1 
\]
which is equivalent to
\[
- \sum_{i \in T_0}  x_{i} + (a - (d+1-m) b)z_1 \leq a - (d+1-m) b.
\]

\smallskip

\noindent\underline{Case 3}:
$a = (d+1-m)b$.
When $a = (d+1-m)b$, the inequality
$-\sum_{i \in T_0} x_{i} + (a - (d+1-m) b)z_1 \leq a-(d+1-m)b$
simplifies to $-\sum_{i \in T_0} x_{i} \leq 0$. 

We sum the lifting inequalities
$-x_i + (a-b)z_1 \leq a-b$
over $i\in T_0$ together with the
inequality 
$  ((d-m)a) \left( -z \leq 0 \right)$, and then divide by 
any $r$ such that $r > (d-m)a$ to obtain
\begin{align*}
    -\sum_{i \in T_0} \frac{1}{r(1-f_0)} x_i \leq \left\lfloor \frac{(d-m)a}{r} \right\rfloor = 0,
\end{align*}
which is clearly equivalent to
\[
-\sum_{i \in T_0}  x_i \leq 0.
\]

So, we have established that 
after employing our lifting  inequalities for $\D$, we can obtain the facet-describing inequalities 
$-\sum_{i \in T_0} x_{i} + (a - (d+1-m) b)z_1 \leq a-(d+1-m)b$
via the MIR process.

Finally, we move our attention to
the facet-describing inequalities
$\sum_{i \in T_1} x_{i} + \left(  mb-a \right)z_1 \leq mb$. 
Consider the following three types of lifting inequalities:
\begin{equation*}
    \left\{ 
    \begin{array}{ll}
        x_i + (b - a) z_1 \leq b, & \text{ from } \Po_0\,;\\
        -x_i + (a-b)z_1 \leq a-b,~ \sum_{i=1}^d x_i + (db-a)z_1 \leq db, &  \text{ from } \Po_1\,. 
    \end{array}
    \right.
\end{equation*}
Applying the affine involution $x_i \rightarrow b-x'_i$ for $i=1,\ldots,d$, and 
$z_1 \rightarrow 1-z'_1$\,,  we have 
\begin{equation*}
    \left\{ 
    \begin{array}{l}
        -x'_{i} + (a-b) z'_1 \leq a-b; \\
        x'_{j} + (b - a) z'_1 \leq b,~  -\sum_{i=1}^d x'_i + (a - db)z'_1 \leq a-db.
    \end{array}
    \right.
\end{equation*}
Then, with the same starting inequalities, we just repeat the procedures before, and we will obtain 
$ - \sum_{i \in T_0} x'_i + (a-(d+1-m)b)z' \leq a - (d+1-m)b$. We apply the affine transformation back, $ - \sum_{i \in T_0} (b-x_i) + (a-(d+1-m)b)(1-z) \leq a - (d+1-m)b \Rightarrow \sum_{i \in T_0} x_i + ((d+1-m)b - a)z \leq (d+1-m)b$. We just observe that $T_0$ has the same definition as $T_1$ and we can convert the $T_0$ to $T_1$, and finally we have $\sum_{i \in T_1} x_i + (mb- a)z \leq mb$.
\end{proof}


\section{Common constraint matrix}\label{sec:common}

Finally, under a significant but broad technical condition, 
we will see that 
when the polytopes  have a common 
facet-describing constraint matrix 
(for arbitrary $d\geq 1$ and $n\geq 1$),
 the facet-describing inequalities identified above 
 (in Propositions \ref{lem:facet_lift},\ref{lem:nonneg},\ref{lem:sumz})
 give a
complete description of the convex hull $\D$.

We need to set some 
careful (and somewhat non-standard) notation 
for some standard concepts
to make our theorem and proof precise.
For a full column-rank $\bm{A}\in\mathbb{R}^{m\times d}$, a \emph{basic partition} is a pair $\tau:=(\tau_1,\,\ldots,\tau_d)$
and $\eta:=(\eta_1,\,\ldots,\eta_{m-d})$,
so that every element of the
row indices $\{1,\ldots,m\}$ 
is in precisely one position
of these two ordered sets,
and $\bm{A}_{\tau\cdot}$ is invertible.
For $\bm{b}\in\mathbb{R}^m$ 
and $\bm{c}\in \mathbb{R}^d$ ,
the \emph{primal basic solution} 
associated to $\tau,\eta$ is $\bar{x}:=\bm{A}_{\tau\cdot}^{-1}\bm{b}_\tau \in \mathbb{R}^d$,
and the \emph{dual basic solution} associated to  $\tau,\eta$ is $\bar{y}\in\mathbb{R}^m$
defined by $\bar{y}_\tau^\top:=
\bm{c}^\top \bm{A}_{\tau\cdot}^{-1}$ and
$\bar{y}_\eta:=0$.
 Note that $\bar{x}$ is feasible for 
 $\max \{\bm{c}^\top x:  \bm{A} x \leq \bm{b}\}$ precisely when 
 $\bm{A}_{\eta\cdot}\bar{x} \leq \bm{b}_\eta$\,,
 and $\bar{y}$
 is feasible for the dual
 $\min\{ y^\top \bm{b} ~:~ y^\top \bm{A}=\bm{c},~ y\geq 0\}$ precisely when 
 $\bar{y}_\tau\geq 0$.
It is well known and celebrated that when an optimal solution exists
for $\max \{\bm{c}^\top x:  \bm{A} x \leq \bm{b}\}$,
then there is a basic partition so that the associated primal basic solution is optimal for 
$\max \{\bm{c}^\top x:  \bm{A} x \leq \bm{b}\}$ and
the associated dual basic solution is optimal for
 $\min\{ y^\top \bm{b} ~:~ y^\top \bm{A}=\bm{c},~ y\geq 0\}$. 

\begin{thm}\label{thm:hull3}
For $n \geq 1$ and 
$d \geq 1$, 
let $\Po_i:=\{ x\in\mathbb{R}^d ~:~ \bm{A} x \leq \bm{b}^i\}$
be full dimensional, where 
$\bm{b}^i\in\mathbb{R}^m$, for  $i\in N_0$\,.
We assume that the (single) matrix $\bm{A}$ has full column rank, and that for every row $\bm{A}_{j\cdot}$ of $\bm{A}$,
the inequality $\bm{A}_{j\cdot} x \leq \bm{b}^i_k$ describes a 
nonempty face of every $\Po_i$ and a 
facet of some $\Po_i$\,.
We further suppose:
\begin{equation}
\tag{$\mathrm{\Phi}$}\label{Phi}
\begin{array}{l}
\mbox{For every
basic partition $\tau,\eta$ of the \emph{row} indices of the matrix $\bm{A}$,}\\
\mbox{we have that if }\bm{A}_{\eta\cdot} \bm{A}_{\tau \cdot}^{-1} \bm{b}^i_\tau \leq \bm{b}^i_\eta\,,
\mbox{ holds for some $i$ in $N_0$\,, then it}\\
\mbox{holds for every $i$ in $N_0$\,.}
\end{array}
\end{equation}
Then
the optimal full big-M lifting inequalities 
$\bm{A} x + \sum_{i \in N} (\bm{b}^0 - \bm{b}^i) z_i \leq \bm{b}^0$\,, 
together with $\sum_{j\in N} z_j \leq 1$, and 
  $z_j\geq 0, \mbox{ for } j\in N$,
gives a minimal inequality description of the convex hull $\D$.
\end{thm}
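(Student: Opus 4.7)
The plan is to show that every facet of $\D$ coincides with one of the inequalities in the claimed description; combined with Theorems \ref{lem:facet_lift}, \ref{lem:nonneg}, and \ref{lem:sumz}, this will yield both completeness and minimality. I fix an arbitrary facet $\mathcal{F}$ of $\D$ and select $n+d$ affinely independent extreme points of $\mathcal{F}$, with $\sigma_i$ of them drawn from $\bar{\Po}_i$\,. If some $\sigma_i=0$, the conclusion is immediate: when $i=0$ every selected extreme point satisfies $\sum_{j\in N}z_j=1$, forcing $\mathcal{F}$ to be that non-vertical facet; when $i\in N$ every selected extreme point satisfies $z_i=0$, forcing $\mathcal{F}$ to equal the facet $\{z_i=0\}\cap\D$.

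The substantive case is when every $\sigma_i\geq 1$. Writing the inequality describing $\mathcal{F}$ as $\alpha^\top x + \sum_{i\in N}\mu_i z_i\leq \beta$ and adopting the convention $\mu_0:=0$, the presence of a tight extreme point from each $\bar{\Po}_i$ forces $\max\{\alpha^\top x : x\in\Po_i\}=\beta-\mu_i$ for every $i\in N_0$. The core of the argument is to exhibit a single dual vector $y\in\mathbb{R}^m_+$ certifying all $n+1$ of these linear programs simultaneously, i.e.\ $y^\top\bm{A}=\alpha^\top$ together with $y^\top\bm{b}^i=\beta-\mu_i$ for every $i$. Once such a $y$ is produced, the inequality describing $\mathcal{F}$ equals $\sum_{k=1}^m y_k L_k$, where $L_k$ denotes the $k$-th listed lifting inequality. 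Since each $L_k$ is itself facet-describing for $\D$ by Theorem \ref{lem:facet_lift}, every extreme point of $\mathcal{F}$ saturates $L_{k^*}$ for each $k^*\in\operatorname{supp}(y)$; so $\mathcal{F}$ is contained in the facet described by $L_{k^*}$, and this forces $\mathcal{F}$ to coincide with that lifting facet.

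To produce this common $y$, I will start from an optimal primal--dual basic pair for $\max\{\alpha^\top x : x\in\Po_0\}$ with basic partition $(\tau,\eta)$ and dual basic solution $y$, so that $y\geq 0$, $y^\top\bm{A}=\alpha^\top$, $\operatorname{supp}(y)\subseteq\tau$, and $y^\top\bm{b}^0=\beta$. Condition \eqref{Phi} upgrades the primal feasibility of $(\tau,\eta)$ from $\Po_0$ to every $\Po_i$, so $\bar{x}^i:=\bm{A}_{\tau\cdot}^{-1}\bm{b}^i_\tau\in\Po_i$ for each $i$. Dual feasibility of $y$ is independent of the right-hand side, and complementary slackness holds for every $i$ because $y$ is supported on $\tau$ while $\bar{x}^i$ saturates precisely the rows indexed by $\tau$. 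Hence $(\bar{x}^i,y)$ is primal--dual optimal for $\max\{\alpha^\top x : x\in\Po_i\}$, giving $y^\top\bm{b}^i=\beta-\mu_i$ as required. The main obstacle I anticipate is coordinating the LP-duality argument with \eqref{Phi} cleanly when the optimal basic partition is degenerate; modulo this, the rest of the argument, including minimality, follows from the facet-describing results already established in Section \ref{sec:general}.
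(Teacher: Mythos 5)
Your proposal is correct, and its architecture is genuinely different from the paper's. The paper proves the theorem by LP duality applied to the proposed relaxation itself: for an arbitrary objective $\bm{c}^\top x+\bm{g}^\top z$ it exhibits the primal candidate $\genfrac(){0pt}{1}{\hat x^{\hat k}}{\mathbf{e}_{\hat k}}$ (an extreme point of $\D$) together with an explicit dual feasible pair $(\hat y,\hat\pi)$ of equal objective value, and concludes by weak duality that every linear function is maximized over the relaxation at a point of $\D$, hence the relaxation equals $\D$. You instead argue facet-by-facet: a facet inequality $\alpha^\top x+\sum_{i\in N}\mu_i z_i\le\beta$ that is tight on each $\bar{\Po}_i$ admits a single dual vector $y\ge 0$ with $y^\top\bm{A}=\alpha^\top$ and $y^\top\bm{b}^i=\beta-\mu_i$ for all $i\in N_0$\,, so the inequality is a nonnegative combination of the listed lifting inequalities and therefore (these being facet-describing by Theorem \ref{lem:facet_lift}) must coincide with one of them. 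The crucial mechanism is the same in both proofs: condition \eqref{Phi} upgrades primal feasibility of an optimal basic partition for $\Po_0$ to every $\Po_i$\,, dual basic feasibility is independent of the right-hand side, and complementary slackness is structural ($y_\eta=0$, $\bm{A}_{\tau\cdot}\bar x^i=\bm{b}^i_\tau$), so one dual vector certifies all $n+1$ maximizations simultaneously. What each route buys: the paper's duality argument avoids any case analysis on facets and signatures, while yours yields the sharper structural fact that every vertical facet of $\D$ literally is a lifting facet (both routes still lean on the facet results of \S\ref{sec:general} for minimality). Three small repairs: your anticipated degeneracy obstacle is not one, since the fundamental theorem of linear programming (as recalled in \S\ref{sec:common}) supplies a basic partition whose primal and dual basic solutions are simultaneously optimal, degenerate or not, and that is all your argument uses; you should note that $\alpha\neq 0$ in the all-$\sigma_i\ge 1$ case (otherwise tightness on $\bar{\Po}_0$ and on each $\bar{\Po}_i$ forces $\beta=0$ and $\mu_i=0$, contradicting facetness), which guarantees $y\neq 0$ and hence a nonempty support from which to pick $k^*$; and to invoke Theorem \ref{lem:facet_lift} for the listed inequalities you should verify, as the paper does, that $\bm{A}x+\sum_{i\in N}(\bm{b}^0-\bm{b}^i)z_i\le\bm{b}^0$ really are the full optimal big-M liftings --- this is exactly where the hypothesis that each row describes a nonempty face of every $\Po_i$ and a facet of some $\Po_i$ enters, giving lifting coefficients $\bm{b}^0_j-\bm{b}^i_j$ for row $j$ and the facet property needed for both the containment step and minimality.
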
 

\begin{rem} We wish to emphasize
that in Theorem \ref{thm:hull3},
the dependence of the $\Po_i$ on $i$ is only through 
$\bm{b}^i$\,. 
Of course we already know that the result is true for $d=1,2$ by Theorem \ref{thm:hull}. So the further contribution of Theorem \ref{thm:hull3} is for $d\geq 3$. 
\end{rem}

\begin{rem}
We note that 
Theorem \ref{thm:hull3} is
closely related to the main result of \cite{Jeroslow}. But 
the model and results of \cite{Jeroslow} 
do not employ binary variables to model the disjunction.
\end{rem}

Before taking up the proof of Theorem \ref{thm:hull3}, we state a very relevant corollary, a remark, and a few examples, which will shine 
some light on some key aspects of the theorem and its proof.

From the Theorem \ref{thm:hull3}, we can derive the following concrete
special case of hyper-rectangles,  which is interesting and useful in many applications. 

\begin{cor}[{\cite[Theorem 7]{QL_ISCO2024}}]
\label{cor:hyper}
For $n \geq 1$ and 
$d \geq 1$, we consider  $n+1$ hyper-rectangles 
$\Po_j:= [\bm{\ell}_{j1},\bm{u}_{j1}] \times \cdots \times [\bm{\ell}_{jd},\bm{u}_{jd}]$,
for $j\in N_0$\,.
The full  optimal big-M lifting inequalities 
$x_i + \textstyle  \sum_{j \in N} (\bm{\ell}_{0i} - \bm{\ell}_{ji}) z_j \geq \bm{\ell}_{0i}$\,, 
$\textstyle x_i + \sum_{j \in N} (\bm{u}_{0i} - \bm{u}_{ji}) z_j \leq \bm{u}_{0i}\,, \text{ for all } i = 1, \cdots, d$,
together with $\sum_{j\in N} z_j \leq 1$, and 
  $z_j\geq 0, \mbox{ for } j\in N$,
gives the convex hull $\D$.  
\end{cor}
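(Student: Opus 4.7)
The plan is to show that the polytope $\tilde{\D}$ cut out by the candidate inequality system coincides with $\D$, by verifying that the two have the same support function. Validity of all three families is already known (Theorems \ref{lem:facet_lift}, \ref{lem:nonneg}, \ref{lem:sumz}), so $\D \subseteq \tilde{\D}$; it suffices to prove, for every objective $(c,d) \in \mathbb{R}^{d+n}$, that
\[
\max\{c^\top x + d^\top z : (x,z) \in \tilde{\D}\} \;=\; \max\{c^\top x + d^\top z : (x,z) \in \D\}.
\]

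The first step is a compact reformulation. Introducing the shorthand $z_0 := 1-\sum_{i\in N} z_i$ and $d_0 := 0$, the lifted inequality $\bm{A}x + \sum_{i\in N}(\bm{b}^0-\bm{b}^i)z_i \leq \bm{b}^0$ collapses to $\bm{A}x \leq \bar{b}(z) := \sum_{i \in N_0} z_i \bm{b}^i$, while the remaining constraints force $(z_0,z_1,\dots,z_n)$ into the simplex $\Delta := \{\xi \in \mathbb{R}^{n+1}_+ : \sum_{i\in N_0}\xi_i = 1\}$. The LP over $\tilde{\D}$ therefore decouples as
\[
\max_{z \in \Delta}\!\left\{\textstyle\sum_{i\in N_0} z_i d_i + \max_x\bigl\{c^\top x : \bm{A}x \leq \bar{b}(z)\bigr\}\right\}.
\]

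The crucial step uses \eqref{Phi}. Since $\Po_0$ is a full-dimensional polytope, $\max\{c^\top x : x \in \Po_0\}$ is attained at the primal basic solution of some basic partition $(\tau^*,\eta^*)$. The corresponding dual basic solution $y_{\tau^*}^\top := c^\top \bm{A}_{\tau^*\cdot}^{-1}$, $y_{\eta^*}:=0$ is feasible ($y_{\tau^*}\geq 0$), and this feasibility depends only on $c$ and $\bm{A}$, not on any $\bm{b}^i$ or $z$. Primal feasibility for $\Po_0$ reads $\bm{A}_{\eta^*\cdot}\bm{A}_{\tau^*\cdot}^{-1}\bm{b}^0_{\tau^*} \leq \bm{b}^0_{\eta^*}$, and \eqref{Phi} extends the same inequality to every $i \in N_0$. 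Taking a convex combination with weights $z_i$ shows that $(\tau^*,\eta^*)$ remains primal feasible for the right-hand side $\bar{b}(z)$ for every $z \in \Delta$. Hence $(\tau^*,\eta^*)$ is optimal for the inner LP uniformly in $z \in \Delta$, with common value $c^\top \bm{A}_{\tau^*\cdot}^{-1}\bar{b}_{\tau^*}(z) = \sum_{i \in N_0} z_i \mu_i$, where $\mu_i := c^\top \bm{A}_{\tau^*\cdot}^{-1}\bm{b}^i_{\tau^*}$; specializing to the extreme points of $\Delta$ (i.e.\ to each $\bm{b}^i$ individually) identifies $\mu_i = \max\{c^\top x : x \in \Po_i\}$ by LP duality.

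Substituting back, the outer problem becomes $\max_{z\in\Delta}\sum_{i\in N_0} z_i(\mu_i+d_i) = \max_{i\in N_0}(\mu_i+d_i)$, attained at a vertex $z = \mathbf{e}_k$ of the simplex, whose accompanying optimal $x$ is a vertex of $\Po_k$; thus the optimizer lies in $\bar{\Po}_k \subseteq \D$. This matches the two support functions and yields $\tilde{\D} = \D$. Minimality of the description is then immediate from Theorems \ref{lem:facet_lift}, \ref{lem:nonneg}, and \ref{lem:sumz}, which already guarantee that each listed inequality is facet-describing. The main obstacle is recognizing that \eqref{Phi} is precisely the hypothesis needed to keep a single basic partition simultaneously optimal throughout the parametric family $\{\max\{c^\top x : \bm{A}x \leq \bar{b}(z)\}\}_{z\in\Delta}$; once this observation is in place, the outer problem collapses harmlessly to a simplex vertex, and no further combinatorial argument is needed.
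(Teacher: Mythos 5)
Your argument, as written, is a proof of the general Theorem \ref{thm:hull3} (granting its hypotheses), not of Corollary \ref{cor:hyper}: you invoke \eqref{Phi} as an assumption, but in the corollary \eqref{Phi} is not a hypothesis --- it is a property of the specific data $\bm{A}=[\,I_d\,|\,-I_d\,]^\top$ and $\bm{b}^j=(\bm{u}_{j1},\ldots,\bm{u}_{jd},-\bm{\ell}_{j1},\ldots,-\bm{\ell}_{jd})^\top$ that has to be verified, and your proposal never mentions the hyper-rectangle structure at all. That verification is exactly what the paper's proof of the corollary consists of: any basic partition $\tau$ with $\bm{A}_{\tau\cdot}$ invertible must pick, for each coordinate $i$, exactly one of the two parallel rows associated with that coordinate, so the primal basic solution fixes each $x_i$ at $\bm{u}_{ji}$ or $\bm{\ell}_{ji}$ and is a vertex of the box $\Po_j$, hence feasible for \emph{every} $j\in N_0$; thus \eqref{Phi} holds (in the strong form that every basic solution is feasible for every $\Po_i$), and the remaining hypotheses of Theorem \ref{thm:hull3} (full column rank, every row describing a nonempty face of each $\Po_i$ and a facet of some $\Po_i$) are likewise immediate for boxes. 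Without a sentence of this kind your "crucial step" rests on an unverified condition, so as a proof of the stated corollary there is a genuine (though easily repaired) gap --- and the repair is the only content the corollary adds beyond the theorem.

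Once \eqref{Phi} is discharged, the rest of your argument is correct and takes a genuinely different route from the paper's proof of Theorem \ref{thm:hull3}. The paper exhibits an explicit dual solution $(\hat y,\hat\pi)$ for the lifted LP and checks feasibility and equality of objective values; you instead decouple the LP, note that under \eqref{Phi} a single basic partition remains primal- and dual-feasible for every right-hand side $\bar b(z)=\sum_{i\in N_0}z_i\bm{b}^i$ with $z$ in the simplex, conclude that the inner value function is linear in $z$, and hence that the outer maximum is attained at a simplex vertex, i.e., at a point of some $\bar\Po_k\subseteq\D$. This parametric, uniform-basis view is arguably cleaner and makes transparent why \eqref{Phi} is the right condition. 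Two small housekeeping points: you should note that the inner LPs are feasible (because $\bar b(z)$ is a convex combination of the $\bm{b}^i$ and each $\Po_i\neq\emptyset$) and bounded (the recession cone $\{x:\bm{A}x\leq 0\}$ is trivial here), and the corollary claims only the convex-hull description, so the closing appeal to minimality is unnecessary.
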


\begin{proof}\!\!.
We simply observe that here we have that
for \emph{every}
basic partition $\tau,\eta$ of the row indices of the matrix $\bm{A}=[I_d ~|~ -I_d]^\top$,
we have 
$
\bm{A}_{\eta\cdot} \bm{A}_{\tau\cdot}^{-1} \bm{b}^i_{\tau\cdot} \leq \bm{b}^i_\eta\,,
$
for \emph{all} $i\in N_0$\,.
That is, the
basic solution 
corresponding to every basic partition is feasible  for each  $\Po_i$\,.
\end{proof}

\begin{rem}
Concerning the hypothesis:
``for every row $\bm{A}_{j\cdot}$ of $\bm{A}$,
the inequality $\bm{A}_{j\cdot} x \leq \bm{b}^i_k$ describes a 
nonempty face of every $\Po_i$ and a 
facet of some $\Po_i$'',  
it is easy to check the following two statements:
If $\bm{A}_{j\cdot} x \leq \bm{b}^i_k$
describes an empty face of $\Po_i$ for some $i\in N_0$\,, then all of the $n$ optimal full big-M liftings of it  will be redundant for $\D$. 
If $\bm{A}_{j\cdot} x \leq \bm{b}^i_k$
does not describe a facet of $\Po_i$ for all $i\in N_0$\,, then none of its $n$ optimal full big-M liftings of it  will 
describe a facet of $\D$. 
\end{rem}

\begin{ex}
Concerning the hypothesis \ref{Phi}, 
we refer to $\Po_0$ and $\Po_1$ from the proof of Proposition
\ref{prop:badexample},
but we incorporate some redundant inequalities in such a way that we satisfy the hypothesis 
``for every row $\bm{A}_{j\cdot}$ of $\bm{A}$,
the inequality $\bm{A}_{j\cdot} x \leq \bm{b}^i_k$ describes a 
nonempty face of every $\Po_i$ and a 
facet of some $\Po_i$''.
Specifically, we let
\[
\bm{A}:=
\begin{bmatrix}[r]
    1 & 0 & 1 \\
    0 & 1 & 0 \\
    0 & 0 & 1 \\
    -1&-1&-1 \\
    -1 & 0 & 0 \\
    0 & -1 & 0 \\
    0 & 0 & -1 \\
    1 & 1 & 1 
\end{bmatrix},
~\bm{b}^0 :=
\begin{bmatrix}[r]
 5 \\ 5 \\ 5 \\ -14 \\ -4 \\ -4 \\ -4 \\ 15
\end{bmatrix},
~\bm{b}^1 :=
\begin{bmatrix}[r]
1 \\ 1 \\ 1 \\ 0 \\ 0 \\ 0 \\ 0 \\ 1
\end{bmatrix}.
\]
In this description, we shift each of the four facet describing inequalities of each of $\Po_0$ and $\Po_1$ so that it describes a nonempty face (actually a 0-dimensional face) of the other.

As we indicated in Remark \ref{rem:badsimplices}, 
there are 6 facets that do not come 
from lifting facet-describing inequalities of the $\Po_i$\,. Namely, those described by
$ 9 -9z_1 \leq x_i+x_j \leq 10-9z_1$\,, for each of the three choices of 
distinct pairs $i,j \in \{1,2,3\}$. Therefore, the conclusion of Theorem 
\ref{thm:hull3} fails.

But we can see that choosing, for example,
$\tau=(1,2,3)$ and $\eta=(4,5,7,8)$, we obtain the basic solution $\bar{x}=(5,5,5)^\top$ 
which is indeed in $\Po_0$\,, but
$\bar{x}=(1,1,1)^\top$
which is \emph{not} in $\Po_1$\,. So the 
hypothesis \ref{Phi} fails.
\end{ex}

\begin{ex}
We give a construction to show that there are examples of other polytopes satisfying the hypotheses of Theorem \ref{thm:hull3} besides hyper-rectangles. Our construction applies in any dimension $d\geq 2$, but in light of Theorem \ref{thm:hull}, the relevancy is for $d\geq 3$. 
In a sense, our construction extracts 
and generalizes the aspects of 
hyper-rectangles that we
exploited.
We begin with any system $\bm{A}x\leq \bm{b}$, giving an irredundant description of
a \emph{simple} full-dimensional polytope $\Po_0$ in $\mathbb{R}^d$.
Now, we construct 
$\Po_1$ (the same can sequentially work for more polytopes) by changing the right-hand sides, one by one, so that the 
combinatorics of the polytope does not change. There is always a small enough change so that this is the case, but large changes can work too.
See, for example, Figure \ref{fig:shift}, with one polytope in black and the other in red.
\end{ex}

\begin{figure}[ht]
\centering
\includegraphics[width=0.65\textwidth]{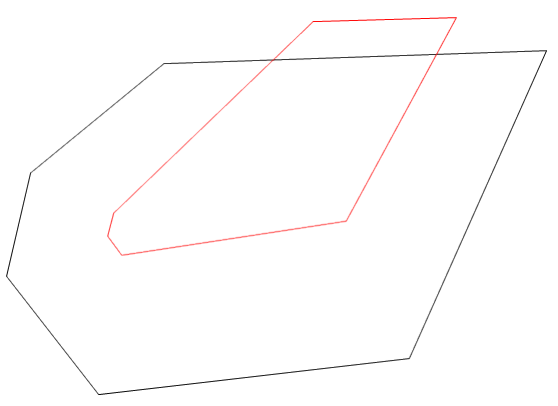}
\caption{An example that is not 
 a set of hyper-rectangles:}\label{fig:shift}
\end{figure}

\begin{proof}~\!\!\!\!{\it [of  Theorem  \ref{thm:hull3}}.]
Consider the associated linear-optimization problem with \emph{arbitrary} $\bm{c} \in \mathbb{R}^d$, $\bm{g} \in \mathbb{R}^n$:
\[
\begin{array}{lll}
& \max~ \bm{c}^{\top}x + \bm{g}^{\top}z\\
&\quad \mbox{subject to}  \\[2pt]
&\textstyle \bm{A} x + \sum_{i \in N} M^{i,0} z_i \leq \bm{b}^0; \\[2pt]
&\textstyle \bm{A} x + \sum_{i \in N \setminus \{k\}}(M^{i,k} - M^{0,k})z_i - M^{0,k} z_k \leq \bm{b}^k - M^{0,k}, \text{ for all } k \in N; \\[2pt]
&\textstyle \mathbf{e}^{\top}z \leq 1;\\
&z\geq 0,
\end{array}
\]
where (letting $m$ denote the number of rows of $\bm{A}$)
\begin{align*}
    M^{i,j}_l :=&~ \min \{\bm{b}^j_l - \bm{A}_{l \cdot}x : x \in \Po_i\}, ~\forall~ l = 1,...,m\,,~ i\in N_0\,,~ j\in N_0 \\
     =&~\, \bm{b}^j_l - \max \{ \bm{A}_{l \cdot}x : x \in \Po_i\}\\
     =&~\, \bm{b}^j_l - \bm{b}^i_l\,.~~ \mbox{ (because
     $\bm{A}_{l \cdot}x \leq \bm{b}^i_l$ describes a nonempty face of $\Po_i$).}
\end{align*} 
Therefore, 
$ M^{i,j} = \bm{b}^j - \bm{b}^i$. 

 Now, it is easy to check that \emph{there
is a unique  full optimal big-M lifting
corresponding to each row of $\bm{A}$,} and we can simplify the linear-optimization problem to:
\[
\begin{array}{lll}
\tag{${P}_H$}\label{primalH}
& \max~ \bm{c}^{\top}x + \bm{g}^{\top}z & {\color{red}\mbox{dual}}\\
&\quad \mbox{subject to}  & {\color{red}\mbox{\underline{var}}}\\[2pt]
&\textstyle \bm{A} x + \sum_{i \in N} (\bm{b}^0 - \bm{b}^i) z_i \leq \bm{b}^0;~ &  {\color{red} y \in \mathbb{R}^{m}}\\[2pt]
&\textstyle \mathbf{e}^{\top}z \leq 1; & {\color{red} \pi \in \mathbb{R}} \\
&z\geq 0.
\end{array}
\]

Our goal is to 
give a recipe that solves the linear-optimization problem \ref{primalH}\,, using always an extreme point of $\D$.
If so, then we will have shown that the inequalities of  \ref{primalH} are the facet-describing inequalities for $\D$.

For $k \in N_0$\,, Let $\hat{x}^{k} := \argmax \{\bm{c}^\top x:  \bm{A} x \leq \bm{b}^k\}$. So, $\hat x ^k$ maximizes $\bm{c}^\top x$ on $\Po_k$\,. 
Now, let  $\hat k
:= \argmax_{k\in N_0} \{ \bm{c}^\top \hat{x}^k + \bm{g}^\top\mathbf{e}_k \}$.
Therefore, 
$\genfrac(){0pt}{1}{\hat x^{\hat k}}{\mathbf{e}_{\hat k}}$
maximizes $\bm{c}^{\top}x + \bm{g}^{\top}z$ on $\D$.
We can immediately observe that 
$\genfrac(){0pt}{1}{\hat x^{\hat k}}{\mathbf{e}_{\hat k}}$
is feasible for \ref{primalH}\,,
which is a relaxation of 
$\D$.   Therefore, to prove that 
$\genfrac(){0pt}{1}{\hat x^{\hat k}}{\mathbf{e}_{\hat k}}$
is optimal for \ref{primalH}\,,
we consider its dual:
\[
\begin{array}{lll}
\tag{${D}_H$}\label{dualH}
& \min~ y^{\top} \bm{b}^0 + \pi \\
&\quad \mbox{subject to}\\
& y^{\top} \bm{A}= \bm{c}^{\top} ; \\
& y^{\top} (\bm{b}^0 - \bm{b}^i) + \pi \geq \bm{g}_i\, , \text{ for all } i \in N;\\
& y \geq 0 ;~ \pi \geq 0.
\end{array}
\]
Next, we construct a dual solution 
$\genfrac(){0pt}{1}{\hat y}{\hat \pi}$
of \ref{dualH}\,, defined by
\begin{align*}
& \hat{y} :=  \argmin\{y^\top \bm{b}^{0} ~:~ 
 y^\top \bm{A} = \bm{c}^{\top},~  y \geq 0\};\\
& \hat{\pi}  :=\begin{cases} 
    0, & \text{ if } \hat{k}=0;\\
    \hat{y}^\top \bm{b}^{\hat{k}} - \hat{y}^\top \bm{b}^0 + \bm{g}_{\hat{k}}\,, & \text{ if } \hat{k} \in N.
  \end{cases}
\end{align*}
It remains only to check that this solution is feasible for  \ref{dualH} and has the same objective value in \ref{dualH}
as $\genfrac(){0pt}{1}{\hat x^{\hat k}}{\mathbf{e}_{\hat k}}$
has in \ref{primalH}\,. 
Then, by weak duality, it follows that $\genfrac(){0pt}{1}{\hat x^{\hat k}}{\mathbf{e}_{\hat k}}$
is optimal for \ref{primalH}\,.
Because we find always an extreme point of $\D$ to solve
\ref{primalH}\,, we can conclude that the $\D$ is precisely the feasible region of \ref{primalH}\,.

We will check
now that this dual solution is feasible for \ref{dualH}.

\noindent $\bullet$ By the construction of $\hat{y}$, it is immediate that $\hat{y} \geq 0$ and that $\hat{y}^\top \bm{A}= \bm{c}$.

\noindent $\bullet$ Next, we check that $\hat{\pi} \geq 0$. If $\hat{k} = 0$, then $\hat{\pi} = 0$, so we can 
assume that $\hat{k} \in N$.
Note that 
we have constructed $\hat{y}$
so that $\bm{c}^\top\hat{x}^0=
\hat{y}^\top\bm{b}^0$, by strong duality applied to $\max \{\bm{c}^\top x:  \bm{A} x \leq \bm{b}^0\}$.
Now, because of property \ref{Phi}, the same basic 
partition that is primal and dual optimal for 
$\max \{\bm{c}^\top x:  \bm{A} x \leq \bm{b}^0\}$
must also be feasible for 
$\max \{\bm{c}^\top x:  \bm{A} x \leq \bm{b}^k\}$, for all $k\in N$.
But it is trivially feasible
for the dual of
all of the $\max \{\bm{c}^\top x:  \bm{A} x \leq \bm{b}^k\}$, because they
all have the same constraints
as the dual of $\max \{\bm{c}^\top x:  \bm{A} x \leq \bm{b}^0\}$.
Therefore, 
$\hat{y}^{\top} \bm{b}^{\hat{k}} = \bm{c}^\top {\hat x}^{\hat k}$\,.
Now,   we have 
$
    \hat{\pi}  = \hat{y}^{\top} \bm{b}^{\hat{k}} - \hat{y}^{ \top} \bm{b}^0 + \bm{g}_{\hat{k}} 
    = \bm{c}^\top {\hat x}^{\hat k}  +  \bm{g}^\top {\mathbf{e}}_{\hat k} - \bm{c}^\top {\hat x}^0, 
$
which is nonnegative due to the choice of $\hat k$.  So we have $\hat \pi\geq 0$. 

\noindent $\bullet$ Next, we will check that
$\hat{y}^{ \top} (\bm{b}^0 - \bm{b}^i) + \pi \geq \bm{g}_i\, , \text{ for all } i \in N$.

If $\hat{k} = 0$, we have 
\begin{align*}
& \hat{y}^{ \top} (\bm{b}^0 - \bm{b}^i) + \pi \geq \bm{g}_i\,, ~ \forall i \in N\\
\iff & 0 \geq \hat{y}^{\top} (\bm{b}^{i} - \bm{b}^0) + \bm{g}_{i} \\
\iff & \hat{y}^{\top} \bm{b}^0 \geq \hat{y}^{\top} \bm{b}^{i} + \bm{g}_{i} \\
\iff &  \bm{c}^\top {\hat x}^0 \geq \bm{c}^\top {\hat x}^{i}  +  \bm{g}^\top {\mathbf{e}}_{i} ~~(\text{because }  \hat{y}^{\top} \bm{b}^{i} = \bm{c}^\top {\hat x}^{i}),
\end{align*}
which is true by the choice of $\hat{k}=0$.

Otherwise, we have $\hat{k} \in N$, and
\begin{align*}
& \hat{y}^{\top} (\bm{b}^0 - \bm{b}^i) + \pi \geq \bm{g}_i\,, ~ \forall i \in N\\
\iff & \hat{y}^{\top} (\bm{b}^0 - \bm{b}^i) + \hat{y}^{\top} (\bm{b}^{\hat{k}} - \bm{b}^0) + \bm{g}_{\hat{k}} \geq \bm{g}_i \\
\iff & \hat{y}^{\top} \bm{b}^{\hat{k}} + \bm{g}_{\hat{k}} \geq \hat{y}^{\top}\bm{b}^i + \bm{g}_i \\
\iff &  \bm{c}^\top {\hat x}^{\hat k}  +  \bm{g}^\top {\mathbf{e}}_{\hat k} \geq \bm{c}^\top {\hat x}^i + \bm{g}^\top {\mathbf{e}}_{i}\,,
\end{align*}
which is again true by the choice of $\hat{k}$.

\medskip

\noindent Therefore, 
$\genfrac(){0pt}{1}{\hat y}{\hat \pi}$
is feasible for \ref{dualH} for $k \in N_0$\,.

Finally, we check that the objective value of  $\genfrac(){0pt}{1}{\hat x^{\hat k}}{\mathbf{e}_{\bar k}}$ in \ref{primalH} and 
$\genfrac(){0pt}{1}{\hat y}{\hat \pi}$
in \ref{dualH} are the same. 
If $\hat k=0$, we have
\begin{align*}
    \bm{c}^\top \hat{x}^{\hat{k}} + \bm{g}^\top \mathbf{e}_0 =  \bm{c}^\top \hat{x}^{\hat{k}} = \hat{y}^\top \bm{b}^{\hat{k}} + 0.
\end{align*}
Otherwise, $\hat{k} \in N$, and 
\begin{align*}
    & \bm{c}^\top {\hat x}^{\hat k}  +  \bm{g}^\top {\mathbf{e}}_{\hat k} \\
    & \quad= \hat{y}^{\top} \bm{b}^{\hat{k}} + \bm{g}_{\hat{k}} \\
    & \quad= \hat{y}^{\top} \bm{b}^0 + \hat{y}^{\top} (\bm{b}^{\hat{k}} - \bm{b}^0) + \bm{g}_{\hat{k}} \\
    & \quad = \hat{y}^{\top} \bm{b}^0 + \hat{\pi}.
\end{align*}
Therefore, we have equality of primal and dual optimal values, and the result follows. 
\end{proof}

\section{Outlook}\label{sec:outlook}
We would like to see how far we can go with generalizing Theorems 
\ref{thm:allfacetssimplex} and \ref{thm:MIRsimplex}, beyond the particular families of polytopes that we considered. In particular, it is quite possible, that $\D$ 
could be completely described by applying one round of MIR inequalities after 
first employing the full optimal big-M lifting inequalities.
Resolving this in the positive for $d=3$ would be a nice result, and we hope that the ideas in the proofs of Theorems 
\ref{thm:allfacetssimplex} and \ref{thm:MIRsimplex} can motivate such an investigation.  

We are also interested in 
developing a broad sufficient conditions (broader than
afforded by Theorem \ref{thm:hull3}) under which
Theorem \ref{thm:hull} would extend to $d=3$. 

Finally, we plan to carry out computational experiments to substantiate the practical value of our results.

\section*{Acknowledgments} The work of both authors was supported in part by Office of Naval Research grants N00014-21-1-2135 and N00014-24-1-2694. 
The work of J. Lee was additionally supported by
the Gaspard Monge Visiting Professor Program at \'Ecole Polytechnique (Palaiseau), and also by the 
National Science Foundation under grant DMS-1929284, while he was in residence at the Institute for Computational and Experimental Research in 
Mathematics (ICERM) at Providence, RI, during the Discrete Optimization semester program, 2023.

\bibliographystyle{alpha}
{\footnotesize
\bibliography{lowdimdisj}}

\end{document}